\newtheorem{theorem}{Theorem}
\newtheorem{proposition}{Proposition}
\newenvironment{proof}{{\it Proof.}}{\hfill$\square$\vspace{0.3cm}\\}
\let\csname equation*\endcsname\relax
\let\csname endequation*\endcsname\relax
\newcommand{\minimize}{\mathop{\vphantom{\sum}\mathchoice
  {\vcenter{\hbox{ minimize}}}
  {\vcenter{\hbox{ minimize}}}{\mathrm{minimize}}{\mathrm{minimize}}}\displaylimits}
\begin{document}
\title[]{Deep Unfolding of a Proximal Interior Point Method for Image Restoration\footnote[1]{Contact author: M.-C. Corbineau, marie-caroline.corbineau@centralesupelec.fr.}}

\author{C. Bertocchi$^1$,  E. Chouzenoux$^{2}$, M.-C. Corbineau$^2$, J.-C. Pesquet$^2$, and 
M. Prato$^1$}

\address{$^1$ Universit\`a di Modena e Reggio Emilia, Modena, Italy}
\address{$^2$ CVN, CentraleSup\'elec, INRIA Saclay, Universit\'e Paris-Saclay, Gif-Sur-Yvette, France}
\ead{carla.bertocchi@unimore.it, emilie.chouzenoux@centralesupelec.fr, marie-caroline.corbineau@centralesupelec.fr, jean-christophe.pesquet@centralesupelec.fr, marco.prato@unimore.it}

\begin{abstract}
Variational methods are widely applied to ill-posed inverse problems for they have the ability to embed prior knowledge about the solution. However, the level of performance of these methods significantly depends on a set of parameters, which can be estimated through computationally expensive and time-consuming methods. In contrast, deep learning offers very generic and efficient architectures, at the expense of explainability, since it is often used as a black-box, without any fine control over its output. 
Deep unfolding provides a convenient approach to combine variational-based and deep learning approaches. 
Starting from a variational formulation for image restoration, we develop iRestNet, a neural network architecture obtained by unfolding a proximal interior point algorithm. Hard constraints, encoding desirable properties for the restored image, are incorporated into the network thanks to a logarithmic barrier, while the barrier parameter, the stepsize, and the penalization weight are learned by the network. We derive explicit expressions for the gradient of the proximity operator for various choices of constraints, which allows training iRestNet with gradient descent and backpropagation. In addition, we provide theoretical results regarding the stability of the network for a common inverse problem example. Numerical experiments on image deblurring problems show that the proposed approach compares favorably with both state-of-the-art variational and machine learning methods in terms of image quality.
\end{abstract}

\vspace{2pc}
\noindent{\it Keywords}: Interior point method, proximal algorithms, deep unfolding, neural network, image restoration, regularization.


\maketitle

\section{Introduction}
\label{sec:intro}
In this work we focus on inverse problems related to the following model:
\begin{equation}
y=\mathcal{D}(H\overline{x}),
\label{pb:inverse_pb}
\end{equation}
where $y\in\mathbb{R}^m$ is the observed data, $\overline{x}\in\mathbb{R}^n$ is the sought signal or image, $H\in\mathbb{R}^{m\times n}$ is the observation operator, which is assumed linear, and $\mathcal{D}$ is the noise perturbation operator. The linear operator $H$ is assumed to be known from a physical model or prior identification step \cite{Lagendijk2005, xu2010two}. In this context, both variational and deep learning approaches provide efficient methods for delivering an estimate of $\overline{x}$, while offering different benefits and drawbacks, which are discussed hereafter.
\\\\ 
In order to find an appropriate solution to an ill-posed inverse problem like (\ref{pb:inverse_pb}), variational methods incorporate prior information on the sought variable $\overline{x}$, through constraints or regularization functions, such as the total variation and its various extensions \cite{Aujol2009firstorder} or sparsity-promoting functions \cite{Pustelnik16}. This leads to the following minimization problem,
\begin{equation}
\min_{x\in\mathcal{C}} ~ f(Hx,y)+\lambda\mathcal{R}(x)
\label{pb:var_pb2}
\end{equation}
where $f:\mathbb{R}^m\times\mathbb{R}^m\rightarrow\mathbb{R}$ is a data-fidelity function, which is convex with regards to its first variable and which is related to the degradation model, $\mathcal{R}:\mathbb{R}^n\rightarrow\mathbb{R}$ is a convex regularization function, $\lambda\in \ ]0,+\infty[$ is a regularization parameter and $\mathcal{C}$ is a subset of $\mathbb{R}^n$. Although useful, this approach is sometimes limited by its complexity: solving (\ref{pb:var_pb2}) may require advanced algorithms that may be too slow for real-time applications. In addition, $\lambda$ is a parameter that needs to be set and $\mathcal{R}$ is usually parametrized by one or several parameters, whose optimal choice may strongly depend on the data at hand. These parameters are often tuned manually or computed using, for instance, cross validation, the discrepancy principle \cite{scherzer1993use}, or methods based on Stein unbiased risk estimates  (SURE) \cite{deledalle2014stein}. However, these methods are often time-consuming and their success is not always guaranteed. Furthermore, despite numerous efforts in designing sophisticated models, the solution to (\ref{pb:var_pb2}) could be further away from $\overline{x}$ than an intermediate iterate produced by a given algorithm used for solving (\ref{pb:var_pb2}). Such phenomenon justifies the development of early stopping methods, where the iterative procedure is stopped before convergence \cite{yao2007early}. Finding the optimal stopping time depends on the algorithm and requires the use of an oracle such as SURE, which may explain why these techniques are currently restricted to relatively simple cost functions.
\\\\
Deep Neural Networks (DNNs), and in particular Convolutional Neural Networks (CNNs), provide good performance for various applications related to inverse problems, such as denoising \cite{zhang2017beyond}, non-blind and blind deblurring \cite{xu2014deep,schuler2013machine,schuler2016learning}, super-resolution \cite{ledig2017photo}, or CT reconstruction \cite{jin2017deep}. As detailed in \cite{mccann2017convolutional}, DNNs for inverse problems are very often preceded by a pre-processing step. Indeed, a rough estimation of $\overline{x}$ can be found by using the inverse or pseudoinverse of $H$. The latter tends, however, to strongly amplify noise. Hence, in this context, DNNs are used as denoisers and artifact-removers. However, since prior knowledge about its output can hardly be incorporated into a DNN, which in most of the cases is viewed as a black-box, the explainability and reliability \cite{szegedy2013intriguing} of such methods could be questioned. Furthermore, the pre-processing step, in itself, can include a penalty, thus amounting to solving a problem of the form (\ref{pb:var_pb2}), where the regularization weight strongly depends on the noise level, \textit{e.g.} \cite{schuler2013machine,boublil2015spatially}. One straightforward way to combine the benefits of both variational-based methods and DNNs is to unfold an iterative method and untie the parameters of both the model and the algorithm across the layers of the network \cite{hershey2014deep}. Interestingly, the fact that this approach makes use of a limited number of layers can be viewed as an analogue of early stopping methods. It is however worth mentioning that, in unfolded algorithms, the number of iterations (i.e., layers) is tuned during the off-line training step and is then fixed for all test images, which differs from early stopping strategies where the iteration number usually differs for
each processed image.
\\\\
In this paper, we propose a novel neural network architecture called iRestNet, which is obtained by unfolding a proximal interior point algorithm over a finite number of iterations. One key feature of this algorithm is that it produces only feasible iterates thanks to a logarithmic barrier. This barrier enables prior knowledge to be directly incorporated into iRestNet and, as opposed to a projection onto $\mathcal{C}$, it allows differentiation and gradient backpropagation throughout the network. Hence, gradient descent can be used for training. The stepsize, barrier parameter, and regularization weight are untied across the network and learned for each layer. Thus, once the network has been trained, its application on test images requires only a short execution time per image without any parameter search, as opposed to traditional variational methods.
\\\\
Related works apply deep unfolding to probabilistic models, such as Markov random fields \cite{hershey2014deep}, topic models \cite{chien2018deep}, and to different algorithms like primal-dual solvers \cite{wang2016proximal} or the proximal gradient method \cite{mardani2017recurrent,diamond2017unrolled}. Classic optimization algorithms can be unfolded to perform many different tasks in image processing. For instance, FISTA and ISTA can be unfolded to perform sparse coding~\cite{gregor2010learning,kamilov2016learning}, while the same ISTA and ADMM can be unfolded for image compressive sensing~\cite{zhang2018ista,sun2016deep}. However, in the aforementioned works, some functions and operators are learned, which weakens the link between the resulting network and the original algorithm. Deep unfolding is also used to learn shrinkage functions, which can be viewed as proximity operators of sparsity-promoting functions \cite{schmidt2014shrinkage,sun2015color}, or to optimize hyperparameters in nonlinear reaction diffusion models \cite{chen2017trainable}. 
Several recent works consider replacing handcrafted algorithms by learned iterative methods \cite{andrychowicz2016learning,li2016learning}. 
In these approaches, the goal is to find the minimizer of a given objective function, whereas in the proposed method, the architecture is inspired by an optimization strategy applied to the minimization of an objective function but a better indicator of perceptual quality is optimized during the training step.
Only a few works so far have considered combining interior point methods (IPMs) with deep learning. Every layer of the network from \cite{Amos2018} solves a small quadratic problem using an IPM, while in \cite{Trafalis97}, hard constraints are enforced on weights by using the logarithmic barrier function during training. More recently, an interior point strategy was used to design a recurrent network, whose purpose is to solve a specific convex constrained problem \cite{Krasopoulos2014}. 
In our case, however, we have two distinct objective functions. The first one leads to a constrained problem from which the proposed architecture is inspired, while the second one is used during training as a loss function. It is worth noting that the output of the trained network is not necessarily a minimizer of the first objective. Moreover, the second objective could not be substituted to the first one since it requires the knowledge of the ground-truth, which is available for training time but not in testing conditions. In addition, iRestNet appears to have more flexibility since the regularization weight can vary among layers.
\\\\
To the best of our knowledge, this paper presents the first architecture corresponding to a deep unfolded version of an interior point algorithm with untied stepsize and regularization parameter. As opposed to other unfolding methods like \cite{mardani2017recurrent,diamond2017unrolled}, the proximity operator and the regularization term are kept explicit, which establishes a direct relation between the original algorithm and the network. Other contributions of this work include the expression of the required proximity operator, and of its corresponding gradient, for three standard variational formulations, along with numerical experiments demonstrating the benefit of using the proposed approach over other machine learning and variational methods for image deblurring. 
\\\\
This paper is organized as follows: in Section~\ref{sec:IPM}, we describe the proximal interior point optimization method which is at the core of our approach, and we provide the proximity operator of the barrier for three useful cases in Section~\ref{sec:prox_computation}. In Section~\ref{sec:net}, we present the proposed neural network architecture and its associated backpropagation method. In Section~\ref{sec:network_stability}, we conduct a stability analysis of the proposed network when the data fidelity term and the regularization function are quadratic. 
Section \ref{sec:exp} is dedicated to numerical experiments and comparison to state-of-the-art methods for image deblurring; finally, some conclusions are drawn in Section \ref{sec:conclu}.

\section{Proximal interior point algorithm}
\label{sec:IPM}
\subsection{Variational formulation and notation}

As detailed in Section~\ref{sec:intro}, the sought image $\overline{x}$ can be classically approximated by the minimizer of a penalized cost function expressed as the sum of a data-fitting term, which measures the fidelity of the solution to the observation model (\ref{pb:inverse_pb}), and a regularization term, which is introduced so as to avoid meaningless solutions and improve stability relative to noise. This leads to problem (\ref{pb:var_pb2}) with $\lambda \in \ ]0,+\infty[$ a regularization parameter. 
For every $q\in\mathbb{N}$, let $\Gamma_0(\mathbb{R}^q)$ denote the set of functions which take values in $\mathbb{R}\cup\{+\infty\}$ and are proper, convex, lower semicontinuous on $\mathbb{R}^q$. In the remaining of the paper, we will assume that, for every $y\in\mathbb{R}^m$, $f(\cdot,y)\in\Gamma_0(\mathbb{R}^m)$ is a twice-differentiable data-fidelity term, and $\mathcal{R}\in\Gamma_0(\mathbb{R}^n)$ is a twice-differentiable regularization function. Note that such assumption is necessary to define the derivative steps involved in the backpropagation procedure for the training of our network. The feasible set $\mathcal{C}$ is defined by $p$ inequality constraints which enforce the fulfillment of some properties that are expected to be satisfied a priori by the image:
\begin{equation}
\mathcal{C}=\{x\in\mathbb{R}^n~|~(\forall i\in\{1,\ldots,p\})~c_i(x)\geq 0\},
\label{def:feasible_set}
\end{equation}
where, for every $i\in\{1,\ldots,p\}$, $-c_i\in\Gamma_0(\mathbb{R}^n)$. The strict interior of the feasible domain, $\rm{int}\mathcal{C}$, is equal to 
\begin{equation}
\mathrm{int}\mathcal{C}=\{x\in\mathbb{R}^n~|~(\forall i\in\{1,\ldots,p\})~c_i(x)> 0\},
\end{equation}
and it is assumed to be nonempty. 
Inequality constraints are frequently used in image processing. For instance, they can be derived from the underlying geometry of the problem, like in \cite{harizanov2013epigraphical}, in the context of Poisson-noise denoising. We can also mention the work in \cite{musse2001topology}, where inequality constraints are used in a problem of deformable image matching to ensure that the estimated image deformation is injective and preserves the topology. Constraints can also serve to enforce some a priori knowledge about the solution, as in the image segmentation approach in \cite{klodt2011convex}, where bound constraints are imposed on the segmented areas and their barycenters.
Finally, we will assume that either $f(H\cdot,y)+\lambda\mathcal{R}$ is coercive, or $\mathcal{C}$ is bounded. Then the existence of solutions for (\ref{pb:var_pb2}) is guaranteed. It is worthy to emphasize that a large class of penalized formulations encountered in the literature of image restoration fulfills the above requirements, see e.g. \cite{durand2006stability} and references therein. 
\\\\
Let us introduce additional notations, which will be useful in the rest of the paper. First, for every $g\in\Gamma_0(\mathbb{R}^n)$, $\gamma\in \ ]0,+\infty[$, and $x\in\mathbb{R}^n$, the proximity operator \cite{bauschke2017convex} of $\gamma g$ at $x$ is uniquely defined as 
\begin{equation}
\mathrm{prox}_{\gamma g}(x)=\underset{u\in\mathbb{R}^n}{\mathrm{argmin}} \ \frac{1}{2}\|x-u\|^2+\gamma g(u).
\end{equation}
Finally, for all $(x,y,\lambda) \in \mathbb{R}^n\times\mathbb{R}^m\times\ ]0,+\infty[$, we define
\begin{equation}
h(x,y,\lambda)=f(Hx,y)+\lambda\mathcal{R}(x),
\end{equation} 
and
\begin{equation}
\nabla_1h(x,y,\lambda)=H^\top\nabla_1f(Hx,y)+\lambda\nabla \mathcal{R}(x),
\end{equation}
where $\nabla_1f$ is the partial gradient of $f$ with respect to its first variable.

\subsection{Interior point approaches}

In general, problem~(\ref{pb:var_pb2}) does not have a closed-form solution on account of the inequality constraints, even for simple regularizations, hence an iterative solver must be used. Several resolution approaches are available, either based on projected gradient strategies \cite{iusem03pgd,bonettini2015new}, ADMM \cite{boyd2011admm}, primal-dual schemes \cite{komodakis2015playing}, or interior point techniques \cite{bonettini2009nonneg}. Standard interior point methods require to invert several $n \times n$ linear systems, which leads to a high computational complexity for large scale problems. Nonetheless, it has recently been shown that combining the interior point framework with a proximal forward--backward strategy \cite{combettes2005fb,combettes2010book} leads to very competitive solvers for inverse problems \cite{chouzenoux2019proximal, corbineau2018pipa, corbineau2018text}.
\\\\
The idea behind IPMs is to replace the initial constrained optimization problem by a sequence of unconstrained subproblems of the form: 

\begin{equation}
\min_{x\in\mathbb{R}^n}  ~ f(Hx,y)+\lambda\mathcal{R}(x)+\mu\mathcal{B}(x)
\label{pb:bar_pb}
\end{equation}
where $\mathcal{B}:\mathbb{R}^n \to \mathbb{R} \cup \{+\infty\}$ is the logarithmic barrier function with unbounded derivative at the boundary of the feasible domain:
\begin{equation}
(\forall x\in\mathbb{R}^n)~~~
\mathcal{B}(x) = \left\{  
\begin{array}{r@{\quad}l} 
\displaystyle-\sum\limits_{i=1}^p \ln(c_i(x)) & {\rm{if}}~x\in\mathrm{int}\mathcal{C} \\  
+\infty &  \rm{otherwise,}  
\end{array}\right.
\label{eq:barrier}
\end{equation}
and $\mu \in \ ]0,+\infty[$ is the so--called barrier parameter which vanishes along the minimization process. We assumed that either $f(H\cdot,y)+\lambda\mathcal{R}$ is coercive, or $\mathcal{C}$ is bounded, hence, the set of solutions to (\ref{pb:var_pb2}) is bounded. Since $\rm{int}\mathcal{C}$ is not empty we can apply \cite[Theorem~5(ii)]{wright1992interior} and the existence of solutions to (\ref{pb:bar_pb}) is guaranteed.

\subsection{Proposed iterative schemes}

Thanks to the proximity operator, the IPM from \cite{kaplan1998proximal} does not require any matrix inversion. When the proximity operator is computed in an exact manner, the proposed IPM can be rewritten as Algorithm~\ref{algo:kaplan}, whose convergence has been proven under some assumptions \cite[Theorem 4.1]{kaplan1998proximal}. 

\vspace*{0.4cm}
\begin{algorithm}[H]
 \caption{Exact version of the proximal IPM in \cite{kaplan1998proximal} applied to problem~(\ref{pb:var_pb2}).}
 \label{algo:kaplan}
\begin{algorithmic}
\STATE Let $x_0\in\mathrm{int}\mathcal{C}$, $\underline{\gamma}>0$ and $\left(\gamma_k\right)_{k\in\mathbb{N}}$ be a sequence such that $(\forall k\in\mathbb{N})$ $\underline{\gamma}\leq\gamma_k$;\\
 \FOR{$k=0,1,\ldots$}
 \STATE $x_{k+1}=\mathrm{prox}_{\gamma_k\left(h(\cdot,y,\lambda)+\mu_k\mathcal{B}\right)}\left(x_k\right)$
 \ENDFOR
\end{algorithmic}
\end{algorithm}
\vspace*{0.4cm}

\noindent Algorithm~\ref{algo:kaplan} requires evaluating the proximity operator of the sum of the barrier and the regularized cost function, which can be an issue since, in most of the cases, this operator does not have a closed-form expression. This is the reason why we propose to modify it by introducing a forward step, which leads to Algorithm~\ref{algo:FB_IPM}.

\vspace*{0.4cm}
\begin{algorithm}[H]
 \caption{Proposed forward--backward proximal IPM.}
 \label{algo:FB_IPM}
\begin{algorithmic}
\STATE Let $x_0\in\mathrm{int}\mathcal{C}$, $\underline{\gamma}>0$ and $\left(\gamma_k\right)_{k\in\mathbb{N}}$ be a sequence such that $(\forall k\in\mathbb{N})$ $\underline{\gamma}\leq\gamma_k$;\\
\FOR{$k=0,1,\ldots$}
\STATE $x_{k+1}=\mathrm{prox}_{\gamma_k\mu_k \mathcal{B}}\left(x_k-\gamma_k\nabla_1h\left(x_k,y,\lambda\right)\right) $
\ENDFOR
\end{algorithmic}
\end{algorithm}
\vspace*{0.4cm}

\noindent To the best of our knowledge, there is no available convergence study for Algorithm~\ref{algo:FB_IPM} among the literature of interior-point methods. There exist links between the above algorithm and the diagonal or penalization method introduced in \cite{czarnecki2016splitting}. Indeed, taking $A\equiv 0$ and $\Psi_1\equiv 0$ in \cite{czarnecki2016splitting} leads to Algorithm~\ref{algo:FB_IPM}, whose convergence is proven. However, there are some key differences between both approaches, namely \textit{i)} in \cite{czarnecki2016splitting}, the barrier parameter tends to infinity while it goes to zero in our case, and \textit{ii)} the algorithm in \cite{czarnecki2016splitting} solves a hierarchical minimization problem instead of the constrained optimization problem \eqref{pb:var_pb2}.
It is worth noting that Algorithm~\ref{algo:FB_IPM} only requires computing the proximity operator of the logarithmic barrier. We will provide its expression in Section~\ref{sec:prox_computation} for three different types of constraints.

\subsection{Limitations}
In IPMs, the barrier parameter and stepsize sequences, $\left(\mu_k\right)_{k\in\mathbb{N}}$ and $\left(\gamma_k\right)_{k\in\mathbb{N}}$, are usually set by following some heuristic rules, which ensure the convergence of the method to a minimizer of the considered objective function. However, handcrafted variational formulations do not necessarily capture perceptual image quality well. These heuristics can thus lead to a loss in terms of efficiency and versatility of the resulting restoration schemes.
Moreover, as already mentioned, an accurate setting of the regularization weights is particularly critical in order to obtain a satisfactory image quality when using such penalized restoration approaches. Existing approaches for selecting $\lambda$, which are based on statistical considerations, are usually associated with a substantial increase of the computational cost. 
\\\\\noindent To overcome these limitations, we propose to unfold Algorithm~\ref{algo:FB_IPM} over a given number of iterations and to learn the stepsize, the barrier and the regularization parameters for every iteration in a supervised fashion. Our machine learning method will make use of gradient backpropagation for its training step. The latter requires the derivatives of the proximity operator in Algorithm~\ref{algo:FB_IPM} with respect to its input and to the aforementioned parameters which are to be learned. Therefore, we first conduct an analysis of the proximity operator of the barrier and of its derivatives, for three examples of interest in Section~\ref{sec:prox_computation}.

\section{Proximity operator of the barrier}
\label{sec:prox_computation}
Let $\mathcal{B}$ be defined as in (\ref{eq:barrier}) and for all $\mu>0$, $\gamma>0$ and $x\in\mathbb{R}^n$, let $\varphi$ be defined as follows:
\begin{equation}
\varphi(x,\mu,\gamma)=\mathrm{prox}_{\gamma\mu\mathcal{B}}(x).
\end{equation}
We provide in this section expressions of $\varphi$ and of its derivatives with respect to its input variable $x$ and the involved barrier and stepsize parameters $(\mu,\gamma)$, for three common types of constraints. The latter will be necessary for training the proposed neural network using a gradient backpropagation scheme. 

\subsection{Affine constraints}

Let us first consider the following half-space constraint:
\begin{equation}
\mathcal{C}=\{x\in\mathbb{R}^n~|~a^\top x\leq b\}, \label{eq:affine}
\end{equation}
with $a\in\mathbb{R}^n\setminus\{0\}$ and $b\in\mathbb{R}$. 

\begin{proposition}
Let $\gamma>0$, $\mu>0$, and let $\mathcal{B}$ be the function associated to \eqref{eq:affine}, defined as
\begin{equation}
(\forall u\in\mathbb{R}^n)~~~
\mathcal{B}(u) = \left\{  
\begin{array}{r@{\quad}l} 
\displaystyle-\ln(b-a^\top u) & {\rm{if}}~a^\top u < b, \\  
+\infty &  \rm{otherwise.}  
\end{array}\right.
\end{equation}
Then, for every $x\in\mathbb{R}^n$, the proximity operator of $\gamma\mu\mathcal{B}$ at $x$
is given by
\begin{equation}
\varphi(x,\mu,\gamma) =  x+\frac{b-a^\top x -\sqrt{(b-a^\top x)^2+4\gamma\mu\|a\|^2}}{2\|a\|^2}a.
\label{eq:af5}
\end{equation}
In addition,  the Jacobian matrix of $\varphi$ with respect to $x$ and the gradients of $\varphi$ with respect to $\mu$ and $\gamma$ are given by
\begin{equation}
J_\varphi^{(x)}(x,\mu,\gamma)=\mathbb{I}_n-\frac{1}{2\|a\|^2}\left(1+\frac{a^\top x-b}{\sqrt{(b-a^\top x)^2+4\gamma\mu\|a\|^2}}\right)aa^\top,
\label{eq:af5dx}
\end{equation}
\begin{equation}
\nabla_{\varphi}^{(\mu)}(x,\mu,\gamma)=\frac{-\gamma}{\sqrt{(b-a^\top x)^2+4\gamma\mu\|a\|^2}}a,
\label{eq:af5dmu}
\end{equation}
and
\begin{equation}
\nabla_{\varphi}^{(\gamma)}(x,\mu,\gamma)=\frac{-\mu}{\sqrt{(b-a^\top x)^2+4\gamma\mu\|a\|^2}}a,
\label{eq:af5dga}
\end{equation}
where $\mathbb{I}_n\in\mathbb{R}^{n\times n}$ denotes the identity matrix.
\label{prop:affine}
\end{proposition}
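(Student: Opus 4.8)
The plan is to compute $\mathrm{prox}_{\gamma\mu\mathcal{B}}(x)$ directly from the definition by writing the first-order optimality condition for the strongly convex minimization problem that defines it, then solving explicitly. First I would observe that, since $\gamma\mu\mathcal{B}\in\Gamma_0(\mathbb{R}^n)$ with $\mathcal{B}$ differentiable on the open half-space $\{u : a^\top u < b\}$, the unique minimizer $u^\star=\varphi(x,\mu,\gamma)$ is characterized by $u^\star - x + \gamma\mu\nabla\mathcal{B}(u^\star)=0$, where $\nabla\mathcal{B}(u)=\dfrac{a}{b-a^\top u}$ on the feasible region. This gives the vector equation
\begin{equation}
u^\star - x + \frac{\gamma\mu}{b-a^\top u^\star}\,a = 0,
\label{eq:prop1_optcond}
\end{equation}
which shows $u^\star = x + t\, a$ for the scalar $t = -\dfrac{\gamma\mu}{b-a^\top u^\star}$. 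Substituting $u^\star = x + t a$ into the expression for $t$ yields a scalar quadratic equation in $t$: $t\,(b - a^\top x - t\|a\|^2) = -\gamma\mu$, i.e. $\|a\|^2 t^2 - (b-a^\top x)\,t - \gamma\mu = 0$. Solving this quadratic and selecting the root that keeps $u^\star$ strictly feasible (equivalently, that makes $b-a^\top u^\star>0$, which forces $t<0$, hence the minus sign before the square root) gives $t = \dfrac{(b-a^\top x) - \sqrt{(b-a^\top x)^2 + 4\gamma\mu\|a\|^2}}{2\|a\|^2}$, which is exactly \eqref{eq:af5}. I would note that $\|a\|\neq 0$ by hypothesis, so no division by zero occurs, and that the discriminant is strictly positive, so $t$ is real and the feasible root is well-defined.

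For the derivative formulas, I would simply differentiate the closed form \eqref{eq:af5}. Writing $s(x,\mu,\gamma) := \sqrt{(b-a^\top x)^2 + 4\gamma\mu\|a\|^2}$, we have $\varphi = x + \dfrac{(b-a^\top x) - s}{2\|a\|^2}\,a$. Differentiating with respect to $x$: the scalar coefficient multiplying $a$ has gradient (as a row vector) obtained from $\nabla_x(b-a^\top x) = -a^\top$ and $\nabla_x s = \dfrac{-(b-a^\top x)}{s}a^\top$, so $J_\varphi^{(x)} = \mathbb{I}_n + \dfrac{a}{2\|a\|^2}\left(-a^\top + \dfrac{(b-a^\top x)}{s}a^\top\right)$, which rearranges to \eqref{eq:af5dx}. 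For $\mu$: only $s$ depends on $\mu$, with $\partial_\mu s = \dfrac{2\gamma\|a\|^2}{s}$, giving $\nabla_\varphi^{(\mu)} = -\dfrac{1}{2\|a\|^2}\cdot\dfrac{2\gamma\|a\|^2}{s}\,a = \dfrac{-\gamma}{s}a$, which is \eqref{eq:af5dmu}; the computation for $\gamma$ is identical with the roles of $\mu$ and $\gamma$ exchanged, giving \eqref{eq:af5dga}.

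The only genuine subtlety — and the step I would be most careful about — is the root selection in the quadratic, i.e. justifying that the correct branch is the one with the minus sign in front of the square root. The argument is that the proximity operator is single-valued (strong convexity), so exactly one root of the quadratic can correspond to a point in the open half-space $\{a^\top u < b\}$; from \eqref{eq:prop1_optcond} we need $b - a^\top u^\star = -\gamma\mu/t > 0$, hence $t<0$, and since $\|a\|^2 t^2 - (b-a^\top x) t - \gamma\mu$ has product of roots equal to $-\gamma\mu/\|a\|^2 < 0$, exactly one root is negative — namely the one with the minus sign — so that root is forced. Everything else is routine algebra and differentiation, so no further obstacles are expected.
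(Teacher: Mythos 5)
Your proof is correct, but it takes a genuinely different route from the paper. The paper dispatches the closed form \eqref{eq:af5} in one line by invoking standard proximal calculus from Bauschke--Combettes: the known proximity operator of $t\mapsto-\ln(\cdot)$ \cite[Example~24.40]{bauschke2017convex}, the translation rule \cite[Proposition~24.8~(v)]{bauschke2017convex}, and the composition-with-a-linear-map rule \cite[Corollary~24.15]{bauschke2017convex}, which reduces the $n$-dimensional problem to a scalar one along the direction $a$; the derivatives are then obtained, as in your argument, by direct differentiation of the closed form. You instead derive \eqref{eq:af5} from first principles: writing the stationarity condition $u^\star-x+\gamma\mu\,\nabla\mathcal{B}(u^\star)=0$ on the open half-space (legitimate, since $\mathcal{B}=+\infty$ on the boundary and outside, so the unique minimizer lies in the interior of the domain where $\mathcal{B}$ is differentiable), observing that $u^\star-x$ is collinear with $a$, reducing to the scalar quadratic $\|a\|^2t^2-(b-a^\top x)t-\gamma\mu=0$, and selecting the negative root via the sign of the product of the roots. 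Your root-selection argument is sound and is exactly the point where a careless derivation could go wrong. What the paper's approach buys is brevity and reusability (the same three citations handle the hyperslab case with minimal changes); what yours buys is self-containedness and an explicit view of why the prox moves the point along $-a$ into the feasible half-space. The derivative computations in the two approaches coincide and your algebra for \eqref{eq:af5dx}--\eqref{eq:af5dga} checks out.
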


\begin{proof}
The expression for the proximity operator (\ref{eq:af5}) directly follows from \cite[Example~24.40]{bauschke2017convex}, \cite[Proposition~24.8~(v)]{bauschke2017convex} and \cite[Corollary~24.15]{bauschke2017convex}. Taking the derivative of (\ref{eq:af5}) with respect to $x$, $\mu$ and $\gamma$ leads to (\ref{eq:af5dx})--(\ref{eq:af5dga}).
\end{proof}

\subsection{Hyperslab constraints}
\label{sec:hyperslab}

We now consider the following hyperslab set:
\begin{equation}
\mathcal{C}=\{x\in\mathbb{R}^n~|~b_{\rm m}\leq a^\top x\leq b_{\rm M}\}, \label{eq:hyper}
\end{equation}
where $a\in\mathbb{R}^n\setminus\{0\}$, $b_{\rm m}\in\mathbb{R}$ and $b_{\rm M}\in\mathbb{R}$ with $b_{\rm m} < b_{\rm M}$. 

\begin{proposition}
Let $\gamma>0$, $\mu>0$, and let $\mathcal{B}$ be the barrier function associated to \eqref{eq:hyper}, defined as
\begin{equation}
(\forall u\in\mathbb{R}^n)~~~
\mathcal{B}(u) = \left\{  
\begin{array}{r@{\quad}l} 
\displaystyle-\ln(b_{\rm M}-a^\top u)-\ln(a^\top u-b_{\rm m})& {\rm{if}}~b_{\rm m}< a^\top u< b_{\rm M}, \\  
+\infty &  \rm{otherwise.}  
\end{array}\right.
\end{equation}
Then, for every $x\in\mathbb{R}^n$, the proximity operator of $\gamma\mu\mathcal{B}$ at $x$ is given by
\begin{equation}
\varphi(x,\mu,\gamma)=x+\frac{\kappa(x,\mu,\gamma)-a^\top x}{\|a\|^2}a,
\label{eq:hyprop1}
\end{equation}
where $\kappa(x,\mu,\gamma)$ is the unique solution in $]b_{\rm m},b_{\rm M}[$, of the following cubic equation:
\begin{equation}
\begin{split}
0=&\ z^3 -(b_{\rm m}+b_{\rm M}+a^\top x)z^2 +(b_{\rm m}b_{\rm M}+a^\top x(b_{\rm m}+b_{\rm M})-2\gamma\mu\|a\|^2)z
\\&-b_{\rm m}b_{\rm M}a^\top x+\gamma\mu(b_{\rm m}+b_{\rm M})\|a\|^2.
\end{split}
\label{eq:hyprop2}
\end{equation}
In addition,  the Jacobian matrix of $\varphi$ with respect to $x$ and the gradients of $\varphi$ with respect to $\mu$ and $\gamma$ are given by
\begin{equation}
J_\varphi^{(x)}(x,\mu,\gamma)=\mathbb{I}_n+\frac{1}{\|a\|^2}\left(\dfrac{(b_{\rm M}-\kappa(x,\mu,\gamma))(b_{\rm m}-\kappa(x,\mu,\gamma))}{\eta(x,\mu,\gamma)}-1\right)a a^\top,
\label{eq:hyprop3}
\end{equation}
\begin{equation}
\nabla_{\varphi}^{(\mu)}(x,\mu,\gamma)=\frac{-\gamma(b_{\rm m}+b_{\rm M}-2\kappa(x,\mu,\gamma))}{\eta(x,\mu,\gamma)}a,
\end{equation}
and
\begin{equation}
\nabla_{\varphi}^{(\gamma)}(x,\mu,\gamma)=\frac{-\mu(b_{\rm m}+b_{\rm M}-2\kappa(x,\mu,\gamma))}{\eta(x,\mu,\gamma)}a,
\label{eq:hyprop5}
\end{equation}
where
\begin{equation}
\begin{split}
\eta(x,\mu,\gamma)=&\ (b_{\rm M}-\kappa(x,\mu,\gamma))(b_{\rm m}-\kappa(x,\mu,\gamma))
\\&-(b_{\rm m}+b_{\rm M}-2\kappa(x,\mu,\gamma))(\kappa(x,\mu,\gamma)-a^\top x)-2\gamma\mu\|a\|^2.
\end{split}
\label{eq:hyprop6}
\end{equation}
\label{prop:hyperslab}
\end{proposition}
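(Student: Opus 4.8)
The plan is to proceed exactly as in the affine case, reducing the proximity operator computation to a one-dimensional problem along the direction $a$. First I would write the defining minimization problem $\varphi(x,\mu,\gamma)=\argmin_{u}\tfrac12\|x-u\|^2+\gamma\mu\mathcal{B}(u)$ and observe that, since $\mathcal{B}(u)$ depends on $u$ only through $t=a^\top u$, the optimal $u$ must be of the form $u = x + \frac{t - a^\top x}{\|a\|^2}a$ for the optimal scalar $t$; any component of $u-x$ orthogonal to $a$ only increases the quadratic term without affecting the barrier. Substituting this into the objective gives a strictly convex one-dimensional problem in $t\in\,]b_{\rm m},b_{\rm M}[$: minimize $\frac{(t-a^\top x)^2}{2\|a\|^2}-\gamma\mu\ln(b_{\rm M}-t)-\gamma\mu\ln(t-b_{\rm m})$. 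Setting the derivative to zero yields $\frac{t-a^\top x}{\|a\|^2}+\frac{\gamma\mu}{b_{\rm M}-t}-\frac{\gamma\mu}{t-b_{\rm m}}=0$, and clearing denominators by multiplying through by $\|a\|^2(b_{\rm M}-t)(t-b_{\rm m})$ produces the cubic \eqref{eq:hyprop2} with $z=t$. Strict convexity of the 1D objective, together with the fact that it tends to $+\infty$ at both endpoints of $]b_{\rm m},b_{\rm M}[$, guarantees that the minimizer $\kappa(x,\mu,\gamma)$ is the unique root of this cubic in that open interval; calling it $\kappa$ and plugging back gives \eqref{eq:hyprop1}.

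For the derivatives, the clean route is implicit differentiation of the optimality condition rather than differentiating the cubic root formula directly. Write the stationarity equation as $G(\kappa,x,\mu,\gamma) := \frac{\kappa-a^\top x}{\|a\|^2}+\frac{\gamma\mu}{b_{\rm M}-\kappa}-\frac{\gamma\mu}{\kappa-b_{\rm m}}=0$. Differentiating with respect to the scalar $s\in\{(\text{entry of }x),\mu,\gamma\}$ gives $\partial_\kappa G\cdot \partial_s\kappa + \partial_s G = 0$, so $\partial_s\kappa = -\partial_s G / \partial_\kappa G$. A short computation shows $\partial_\kappa G = \frac{1}{\|a\|^2}+\frac{\gamma\mu}{(b_{\rm M}-\kappa)^2}+\frac{\gamma\mu}{(\kappa-b_{\rm m})^2}$; bringing this over the common denominator $\|a\|^2(b_{\rm M}-\kappa)^2(\kappa-b_{\rm m})^2$ and simplifying using the identity $G=0$ (which lets one replace $\gamma\mu\|a\|^2$-type terms) should produce $\partial_\kappa G = -\eta(x,\mu,\gamma)\big/\big(\|a\|^2(b_{\rm M}-\kappa)^2(\kappa-b_{\rm m})^2\big)$ — or more precisely a factorization in which $\eta$ from \eqref{eq:hyprop6} appears. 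I would then compute $\nabla_x G = -a/\|a\|^2$, $\partial_\mu G = \gamma\big(\frac{1}{b_{\rm M}-\kappa}-\frac{1}{\kappa-b_{\rm m}}\big) = \frac{\gamma(b_{\rm m}+b_{\rm M}-2\kappa)}{(b_{\rm M}-\kappa)(\kappa-b_{\rm m})}$, and similarly $\partial_\gamma G$ with $\mu$ in place of $\gamma$. Finally, since $\varphi = x + \frac{\kappa - a^\top x}{\|a\|^2}a$, the chain rule gives $J_\varphi^{(x)} = \mathbb{I}_n + \frac{1}{\|a\|^2}\big(\nabla_x\kappa - a\big)a^\top$ and $\nabla_\varphi^{(\mu)} = \frac{\partial_\mu\kappa}{\|a\|^2}a$, $\nabla_\varphi^{(\gamma)} = \frac{\partial_\gamma\kappa}{\|a\|^2}a$; substituting the expressions for the partials of $\kappa$ yields \eqref{eq:hyprop3}--\eqref{eq:hyprop5}.

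The main obstacle I anticipate is the bookkeeping in simplifying $\partial_\kappa G$ and the numerators of $\partial_s\kappa$ into the compact form involving $\eta(x,\mu,\gamma)$ as defined in \eqref{eq:hyprop6}: one has to use the stationarity relation $G(\kappa,x,\mu,\gamma)=0$ (equivalently, the cubic \eqref{eq:hyprop2}) to eliminate the right combination of terms so that $(\kappa - a^\top x)$ and $2\gamma\mu\|a\|^2$ reappear exactly as they do in $\eta$. Specifically, from $G=0$ one has $\gamma\mu(b_{\rm m}+b_{\rm M}-2\kappa) = \frac{(\kappa-a^\top x)(b_{\rm M}-\kappa)(\kappa-b_{\rm m})}{\|a\|^2}$ after clearing denominators, which is the algebraic substitution that converts the raw derivative expressions into the stated ones. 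Everything else — the 1D reduction, the existence/uniqueness of $\kappa$, and the chain-rule assembly — is routine and parallels Proposition~\ref{prop:affine}. I would also note in passing that $\eta<0$ on the relevant domain (so no division by zero occurs), which follows from $\partial_\kappa G>0$ by strict convexity together with the sign of the denominator in the factorization above.
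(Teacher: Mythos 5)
Your proposal is correct and follows essentially the same route as the paper: implicit differentiation of the stationarity condition defining $\kappa$, followed by the chain rule on \eqref{eq:hyprop1}. The only notable difference is that the paper applies the implicit function theorem to the cleared-denominator polynomial $F(x,\mu,\gamma,z)=(b_{\rm M}-z)(b_{\rm m}-z)(z-a^\top x)+\gamma\mu(b_{\rm M}+b_{\rm m}-2z)\|a\|^2$, so that $\eta=\nabla F^{(z)}(x,\mu,\gamma,\kappa)$ appears directly and $\eta\neq 0$ follows from a sign argument on its three terms, which sidesteps the factorization step you flag as the main obstacle; be aware that your displayed $\partial_\mu G$ has a sign error and your factorization of $\partial_\kappa G$ should have denominator $\|a\|^2(b_{\rm M}-\kappa)(\kappa-b_{\rm m})$ with first powers rather than squares, though both slips cancel and the stated formulas \eqref{eq:hyprop3}--\eqref{eq:hyprop5} do come out correctly when the computation is carried through.
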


\begin{proof}
Let $x\in\mathbb{R}^n$, $\gamma>0$, and $\mu>0$. The expression for the proximity operator (\ref{eq:hyprop1}) follows from \cite[Example~4.15]{chaux2007} and \cite[Corollary~24.15]{bauschke2017convex}. Let $F$ be defined as follows:
\begin{equation}
F(x,\mu,\gamma,z)=(b_{\rm M}-z)(b_{\rm m}-z)(z-a^\top x)+\gamma\mu(b_{\rm M}+b_{\rm m}-2z)\|a\|^2,
\label{eq:hyp_F}
\end{equation}
for $z \in \ ]b_{\rm m},b_{\rm M}[$. Expanding (\ref{eq:hyp_F}) gives the following:
\begin{equation}
\begin{split}
F(x,\mu,\gamma,z)=&\ z^3-(a^\top x+b_{\rm m}+b_{\rm M})z^2+(b_{\rm m}b_{\rm M}+a^\top x(b_{\rm m}+b_{\rm M})-2\gamma\mu\|a\|^2)z\\&-b_{\rm m}b_{\rm M}a^\top x+\gamma\mu(b_{\rm m}+b_{\rm M})\|a\|^2.
\end{split}
\end{equation}
Hence, by definition of $\kappa(x,\mu,\gamma)$, we have $F(x,\mu,\gamma,\kappa(x,\mu,\gamma))=0$. In addition, the derivative of $F$ with respect to its last variable is equal to
\begin{equation}
\nabla F^{(z)}(x,\mu,\gamma,z)=(b_{\rm M}-z)(b_{\rm m}-z)-(b_{\rm m}+b_{\rm M}-2z)(z-a^\top x)-2\gamma\mu\|a\|^2.
\end{equation}
By construction, $(b_{\rm M}-\kappa(x,\mu,\gamma))(b_{\rm m}-\kappa(x,\mu,\gamma))<0$. Moreover, $-2\gamma\mu\|a\|^2<0$ and, since $F(x,\mu,\gamma,\kappa(x,\mu,\gamma))=0$, it follows that $(b_{\rm m}+b_{\rm M}-2\kappa(x,\mu,\gamma))$ and $\kappa(x,\mu,\gamma)-a^\top x$ share the same sign. Hence, 
\begin{equation}
\eta(x,\mu,\gamma) = \nabla F^{(z)}(x,\mu,\gamma,\kappa(x,\mu,\gamma))\neq 0.
\end{equation}
From the implicit function theorem \cite[Theorem~1B.1]{dontchev2009implicit}, we deduce that the gradient of $\kappa$ with respect to $x$ and the partial derivatives of $\kappa$ with respect to $\mu$ and $\gamma$ exist and are equal to
\begin{equation}
\nabla \kappa^{(x)}(x,\mu,\gamma)=\dfrac{(b_{\rm M}-\kappa(x,\mu,\gamma))(b_{\rm m}-\kappa(x,\mu,\gamma))}{\eta(x,\mu,\gamma)}a,
\label{eq:hypproof1}
\end{equation}
\begin{equation}
\nabla \kappa^{(\mu)}(x,\mu,\gamma)=\frac{-\gamma\|a\|^2(b_{\rm m}+b_{\rm M}-2\kappa(x,\mu,\gamma))}{\eta(x,\mu,\gamma)},
\end{equation}
and
\begin{equation}
\nabla \kappa^{(\gamma)}(x,\mu,\gamma)=\frac{-\mu\|a\|^2(b_{\rm m}+b_{\rm M}-2\kappa(x,\mu,\gamma))}{\eta(x,\mu,\gamma)}.
\label{eq:hypproof3}
\end{equation}
Differentiating (\ref{eq:hyprop1}) with respect to $x$, $\mu$ and $\gamma$ and using (\ref{eq:hypproof1})--(\ref{eq:hypproof3}) yields (\ref{eq:hyprop3})--(\ref{eq:hyprop5}).
\end{proof}

\noindent Note that the three roots of (\ref{eq:hyprop2}) can easily be computed using the Cardano formula. The graph of the resulting proximity operator is plotted on Figure~\ref{fig:prox_barriers} (left) for $n=1$, $a=1$, $b_{\rm m}=0$, $b_{\rm M}=1$, and various values for $\gamma\mu$.

\begin{figure}
\setlength\tabcolsep{4pt}
\centering
\begin{tabularx}{\textwidth}{cc}
\includegraphics[width=0.45\textwidth]{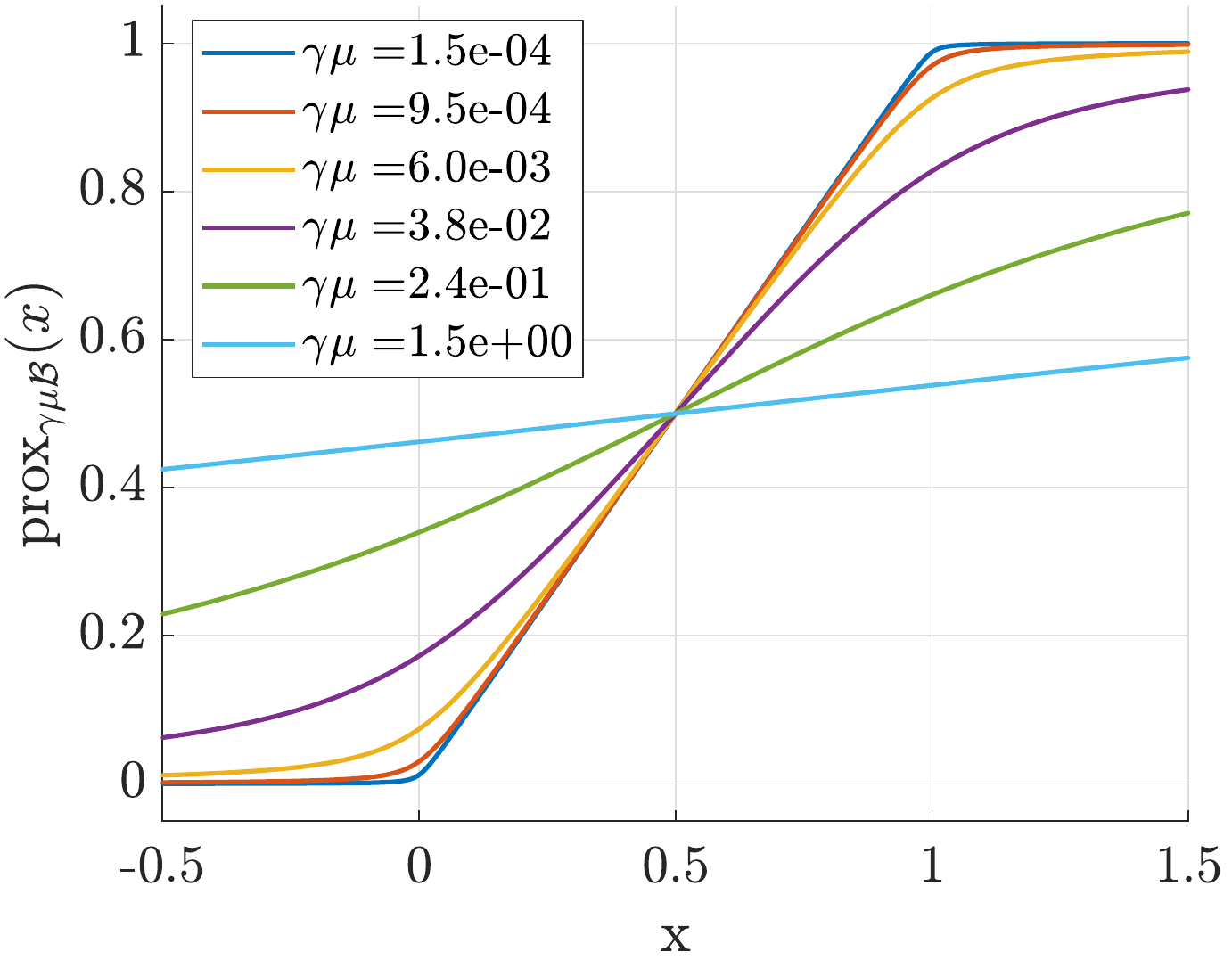}&
\includegraphics[width=0.49\textwidth]{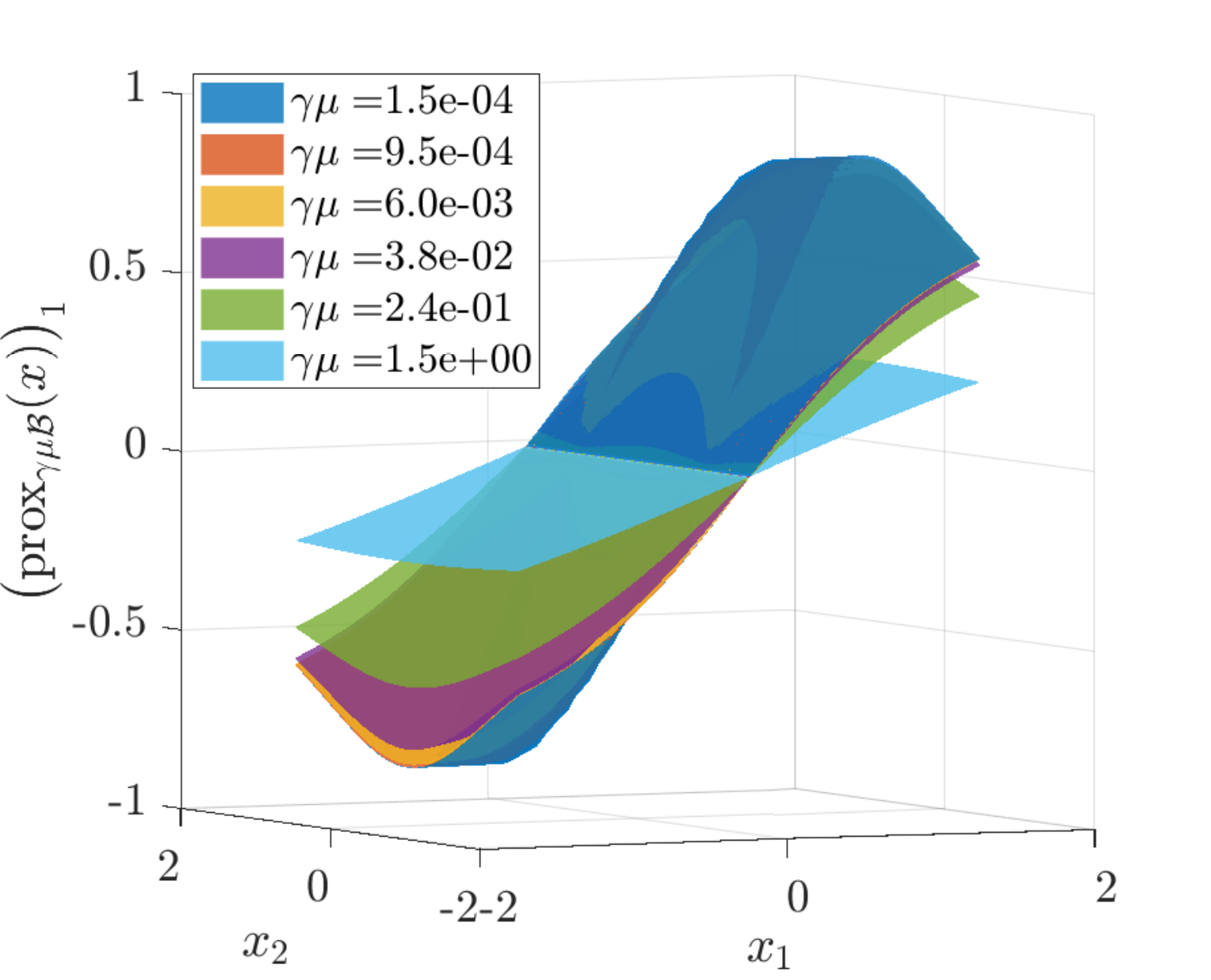}
\end{tabularx}
\caption{Proximity operator of the logarithmic barrier: $\mathrm{prox}_{\gamma\mu\mathcal{B}}(x)$ for hyperslab constraint as in Section~\ref{sec:hyperslab} with $b_{\rm m}=0$ and $b_{\rm M}=1$ ({\em left}), $\left(\mathrm{prox}_{\gamma\mu\mathcal{B}}(x)\right)_1$ for a constraint on the $\ell_2$-norm as in Section~\ref{sec:ll2} with $\alpha=0.7$ ({\em right}).}
\label{fig:prox_barriers}
\end{figure}

\subsection{Bounded $\ell_2$-norm}
\label{sec:ll2}

We now consider the case when the feasible set in (\ref{pb:var_pb2}) is {\color{blue}a} Euclidean ball
\begin{equation}
\mathcal{C}=\{x\in\mathbb{R}^n~|~\|x-c\|^2\leq \alpha\}, \label{eq:eucli}
\end{equation}
with $\alpha>0$ and $c\in\mathbb{R}^n$. 

\begin{proposition}
Let $\gamma>0$ and let $\mu>0$. Let $\mathcal{B}$ be the barrier function associated to \eqref{eq:eucli}, defined as
\begin{equation}
(\forall u\in\mathbb{R}^n)~~~
\mathcal{B}(u) = \left\{  
\begin{array}{r@{\quad}l} 
\displaystyle-\ln(\alpha-\|u-c\|^2)& {\rm{if}}~\|u-c\|^2<\alpha, \\  
+\infty &  \rm{otherwise.}  
\end{array}\right.
\label{eq:barrier_ll2}
\end{equation}
Then, for every $x\in\mathbb{R}^n$, the proximity operator of $\gamma\mu\mathcal{B}$ at $x$ is given by
\begin{equation}
\varphi(x,\mu,\gamma)=c+\frac{\alpha-\kappa(x,\mu,\gamma)^2}{\alpha-\kappa(x,\mu,\gamma)^2+2\gamma\mu}(x-c),\label{eq:bl1}
\end{equation}
where $\kappa(x,\mu,\gamma)$ is the unique solution in $[0,\sqrt{\alpha}[$ of the cubic equation:
\begin{equation}
0=z^3-\|x-c\|z^2-(\alpha+2\gamma\mu)z+\alpha\|x-c\|.
\label{eq:bl2}
\end{equation}
In addition, the Jacobian matrix of $\varphi$ with respect to $x$ and the gradients of $\varphi$ with respect to $\mu$ and $\gamma$ are given by
\begin{equation}
J_\varphi^{(x)}(x,\mu,\gamma)=\frac{\alpha-\|\varphi(x,\mu,\gamma)-c\|^2}{\alpha-\|\varphi(x,\mu,\gamma)-c\|^2+2\gamma\mu}M(x,\mu,\gamma),
\label{eq:bl3}
\end{equation}
\begin{equation}
\nabla_{\varphi}^{(\mu)}(x,\mu,\gamma)=\frac{-2\gamma}{\alpha-\|\varphi(x,\mu,\gamma)-c\|^2+2\gamma\mu}M(x,\mu,\gamma)(\varphi(x,\mu,\gamma)-c),
\label{eq:bl4}
\end{equation}
and
\begin{equation}
\nabla_{\varphi}^{(\gamma)}(x,\mu,\gamma)=\frac{-2\mu}{\alpha-\|\varphi(x,\mu,\gamma)-c\|^2+2\gamma\mu}M(x,\mu,\gamma)(\varphi(x,\mu,\gamma)-c),
\label{eq:bl5}
\end{equation}
where
\begin{equation}
M(x,\mu,\gamma)=\mathbb{I}_n-\frac{2(x-\varphi(x,\mu,\gamma))(\varphi(x,\mu,\gamma)-c)^\top}{\alpha-3\|\varphi(x,\mu,\gamma)-c\|^2+2\gamma\mu+2(\varphi(x,\mu,\gamma)-c)^\top (x-c)}.
\end{equation}
\label{prop:l2}
\end{proposition}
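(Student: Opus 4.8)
The plan is to first establish the closed form (\ref{eq:bl1})--(\ref{eq:bl2}) for $\varphi$, then to obtain the Jacobian and gradients (\ref{eq:bl3})--(\ref{eq:bl5}) by implicit differentiation of a smooth polynomial equation satisfied by $\varphi$, exactly as in the proof of Proposition~\ref{prop:hyperslab}.

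\emph{Proximity operator.} First I would exploit translation equivariance of the proximity operator together with the radial symmetry of the barrier (\ref{eq:barrier_ll2}): writing $\tilde{\mathcal{B}}(v)=-\ln(\alpha-\|v\|^2)$ for $\|v\|<\sqrt{\alpha}$ (and $+\infty$ otherwise) and $\psi(t)=-\ln(\alpha-t^2)$ on $(-\sqrt{\alpha},\sqrt{\alpha})$, which is even and strictly convex, the standard formula for the proximity operator of a composition with the Euclidean norm (see \cite{bauschke2017convex}) gives, for $x\neq c$, $\varphi(x,\mu,\gamma)=c+\frac{\kappa}{\|x-c\|}(x-c)$ where $\kappa:=\mathrm{prox}_{\gamma\mu\psi}(\|x-c\|)\in[0,\sqrt{\alpha})$ --- nonnegative because $\psi$ is even, interior because $\psi$ blows up at $\pm\sqrt{\alpha}$, and unique by strict convexity of $z\mapsto\tfrac12(\|x-c\|-z)^2+\gamma\mu\psi(z)$. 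The first-order condition $\kappa-\|x-c\|+\gamma\mu\,\tfrac{2\kappa}{\alpha-\kappa^2}=0$ becomes, after clearing the denominator, exactly the cubic (\ref{eq:bl2}), while solving the same relation for $\kappa/\|x-c\|$ gives $\kappa/\|x-c\|=(\alpha-\kappa^2)/(\alpha-\kappa^2+2\gamma\mu)$, hence (\ref{eq:bl1}) and in particular $\|\varphi(x,\mu,\gamma)-c\|=\kappa$. The degenerate case $x=c$, where $\varphi(c,\mu,\gamma)=c$ and $\kappa=0$, is covered by the same formulas, and uniqueness of the root of (\ref{eq:bl2}) in $[0,\sqrt{\alpha})$ follows from uniqueness of the one-dimensional proximal point.

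\emph{Derivatives.} Rather than differentiating (\ref{eq:bl2}) directly (the map $x\mapsto\|x-c\|$ is not differentiable at $c$), I would use that $p:=\varphi(x,\mu,\gamma)-c$ solves the polynomial system $\Phi(x,\mu,\gamma,p):=(\alpha-\|p\|^2)(p-x+c)+2\gamma\mu\,p=0$, which is nothing but the proximal optimality condition $x-c-p=\gamma\mu\nabla\tilde{\mathcal{B}}(p)=2\gamma\mu\,p/(\alpha-\|p\|^2)$ with the denominator cleared, hence equivalent to $p=\varphi(x,\mu,\gamma)-c$. Since $\Phi$ is $\mathcal{C}^\infty$ jointly in all of its arguments, the implicit function theorem applies wherever $\partial_p\Phi$ is invertible, and a direct computation gives $\partial_p\Phi=(\alpha-\|p\|^2+2\gamma\mu)\,\mathbb{I}_n+2(x-\varphi)(\varphi-c)^\top$, $\partial_x\Phi=-(\alpha-\|p\|^2)\,\mathbb{I}_n$, $\partial_\mu\Phi=2\gamma(\varphi-c)$, and $\partial_\gamma\Phi=2\mu(\varphi-c)$.

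\emph{Main obstacle: invertibility of $\partial_p\Phi$.} This Jacobian is a rank-one update $\delta\,\mathbb{I}_n+2(x-\varphi)(\varphi-c)^\top$ of a positive multiple of the identity, with $\delta:=\alpha-\|\varphi-c\|^2+2\gamma\mu=(\alpha-\kappa^2)+2\gamma\mu>0$ since $\kappa\in[0,\sqrt{\alpha})$; its eigenvalues are thus $\delta$ (multiplicity $n-1$) and $\delta+2(\varphi-c)^\top(x-\varphi)$. Using $(\varphi-c)^\top(x-\varphi)=(\varphi-c)^\top(x-c)-\|\varphi-c\|^2$ and $(\varphi-c)^\top(x-c)=\kappa\|x-c\|=\kappa^2(\alpha-\kappa^2+2\gamma\mu)/(\alpha-\kappa^2)$ (both read off from (\ref{eq:bl1})), one finds $\delta+2(\varphi-c)^\top(x-\varphi)=\big((\alpha-\kappa^2)(\alpha-\kappa^2+2\gamma\mu)+4\gamma\mu\kappa^2\big)/(\alpha-\kappa^2)>0$, so $\partial_p\Phi$ is invertible everywhere and $\varphi$ is differentiable on all of $\mathbb{R}^n$. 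The Sherman--Morrison formula then gives $(\partial_p\Phi)^{-1}=\delta^{-1}M(x,\mu,\gamma)$ once $\delta+2(\varphi-c)^\top(x-\varphi)$ is rewritten as $\alpha-3\|\varphi-c\|^2+2\gamma\mu+2(\varphi-c)^\top(x-c)$, i.e. the denominator appearing in $M$. Finally the implicit function theorem yields $J_\varphi^{(x)}=-(\partial_p\Phi)^{-1}\partial_x\Phi=(\alpha-\|\varphi-c\|^2)\,\delta^{-1}M$, $\nabla_{\varphi}^{(\mu)}=-(\partial_p\Phi)^{-1}\partial_\mu\Phi=-2\gamma\,\delta^{-1}M(\varphi-c)$ and $\nabla_{\varphi}^{(\gamma)}=-2\mu\,\delta^{-1}M(\varphi-c)$, which are precisely (\ref{eq:bl3})--(\ref{eq:bl5}).
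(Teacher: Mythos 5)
Your proof is correct and follows essentially the same route as the paper's: write the first-order optimality condition for the prox, clear the denominator to obtain the cubic \eqref{eq:bl2} and the closed form \eqref{eq:bl1}, then apply the implicit function theorem to the polynomial system $F(x,\mu,\gamma,v)=(\alpha-\|v\|^2)(v-x)+2\gamma\mu v=0$ and invert the rank-one-perturbed Jacobian via Sherman--Morrison. The only cosmetic differences are that you derive the prox through the one-dimensional radial reduction (the paper instead takes norms in the $n$-dimensional fixed-point relation \eqref{eq:bl8}) and that you check positivity of the Sherman--Morrison denominator by explicit computation from the closed form, whereas the paper gets $\varphi_0^\top(x-\varphi_0)\geq 0$ more quickly by pairing the optimality equation with $\varphi_0$.
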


\begin{proof}
Let $x\in\mathbb{R}^n$, $\gamma>0$, $\mu>0$. 
Let us first consider the case when $c=0$. We denote with $\varphi_0$ the following proximity operator:
\begin{equation}
\varphi_0(x,\mu,\gamma)=\underset{u\in\mathrm{int}\mathcal{C}}{\mathrm{argmin}} \ \frac{1}{2}\|x-u\|^2-\gamma\mu\ln(\alpha-\|u\|^2).
\label{eq:bl6}
\end{equation}
Hence, $\|\varphi_0(x,\mu,\gamma)\|^2<\alpha$ and $\varphi_0(x,\mu,\gamma)$ is a solution to the following equation: 
\begin{equation}
0=\varphi_0(x,\mu,\gamma)-x+\frac{2\gamma\mu}{\alpha-\|\varphi_0(x,\mu,\gamma)\|^2}\varphi_0(x,\mu,\gamma).
\label{eq:bl7}
\end{equation}
Since $\alpha-\|\varphi_0(x,\mu,\gamma)\|^2+2\gamma\mu>0$, (\ref{eq:bl7}) becomes
\begin{equation}
\varphi_0(x,\mu,\gamma)=\frac{\alpha-\|\varphi_0(x,\mu,\gamma)\|^2}{\alpha-\|\varphi_0(x,\mu,\gamma)\|^2+2\gamma\mu}x.
\label{eq:bl8}
\end{equation}
By taking the norm in both sides of (\ref{eq:bl8}), we deduce that $\|\varphi_0(x,\mu,\gamma)\|=\kappa(x,\mu,\gamma)$ is a solution to the cubic equation (\ref{eq:bl2}). Since the proximity operator at a given $x$ is uniquely defined, there exists only one real solution to (\ref{eq:bl2}) which belongs to $[0,\sqrt{\alpha}[$. Plugging the latter into (\ref{eq:bl8}) leads to (\ref{eq:bl1}). The analysis when $c\neq 0$ is deduced from the case $c=0$ by using \cite[Proposition~24.8~(v)]{bauschke2017convex}: the proximity operator of $\gamma\mu\mathcal{B}$ at $x$ is given by 
\begin{equation}
\varphi(x,\mu,\gamma)=c+\varphi_0(x-c,\mu,\gamma).
\label{eq:link_phi_phi0}
\end{equation}
\\\noindent Let us study the derivatives of $\varphi_0$. For every $v\in\mathbb{R}^n$, let $F$ be defined as
\begin{equation}
F(x,\mu,\gamma,v)=(\alpha-\|v\|^2)(v-x)+2\gamma\mu v.
\end{equation} 
The Jacobian of $F$ with respect to its last variable is equal to
\begin{equation}
J_F^{(v)}(x,\mu,\gamma,v)=(\alpha-\|v\|^2+2\gamma\mu)\mathbb{I}_n+2(x-v)v^{\top}.
\end{equation}
Since $\alpha-\|\varphi_0(x,\mu,\gamma)\|^2>0$, according to the Sherman--Morrison Lemma \cite{bartlett1951inverse}, $J_F^{(v)}(x,\mu,\gamma,\varphi_0(x,\mu,\gamma))$ is invertible if and only if
\begin{equation}
\alpha-\|\varphi_0(x,\mu,\gamma)\|^2+2\gamma\mu +2\varphi_0(x,\mu,\gamma)^\top(x-\varphi_0(x,\mu,\gamma))\neq 0.
\end{equation}
Furthermore, it follows from (\ref{eq:bl7}) that
\begin{equation}
F(x,\mu,\gamma,\varphi_0(x,\mu,\gamma))=0.
\label{eq:bl9}
\end{equation}
Applying $\varphi_0(x,\mu,\gamma)^\top$ on (\ref{eq:bl9}) leads to $\varphi_0(x,\mu,\gamma)^\top(x-\varphi_0(x,\mu,\gamma))\geq 0$. In addition, $\alpha-\|\varphi_0(x,\mu,\gamma)\|^2+2\gamma\mu>0$. Hence, $J_F^{(v)}(x,\mu,\gamma,\varphi_0(x,\mu,\gamma))$ is invertible and its inverse is given by the Sherman--Morrison formula:
\begin{equation}
\begin{split}
J_F^{(v)}(x,\mu,\gamma,\varphi_0(x,\mu,\gamma))^{-1}=&\frac{1}{\alpha-\|\varphi_0(x,\mu,\gamma)\|^2+2\gamma\mu}\times\\
&\left[\mathbb{I}_n-\frac{2(x-\varphi_0(x,\mu,\gamma))\varphi_0(x,\mu,\gamma)^\top}{\alpha-3\|\varphi_0(x,\mu,\gamma)\|^2+2\gamma\mu+2\varphi_0(x,\mu,\gamma)^\top x}\right].
\end{split}
\end{equation}
From the implicit function theorem \cite[Theorem~1B.1]{dontchev2009implicit} we deduce that the Jacobian of $\varphi_0$ with respect to $x$  and the gradients of $\varphi_0$ with respect to $\mu$ and $\gamma$ exist and are equal to
\begin{equation}
J_{\varphi_0}^{(x)}(x,\mu,\gamma)=-J_F^{(v)}(x,\mu,\gamma,\varphi_0(x,\mu,\gamma))^{-1}J_F^{(x)}(x,\mu,\gamma,\varphi_0(x,\mu,\gamma)),
\end{equation}
\begin{equation}
\nabla_{\varphi_0}^{(\mu)}(x,\mu,\gamma)=-J_F^{(v)}(x,\mu,\gamma,\varphi_0(x,\mu,\gamma))^{-1}\nabla_F^{(\mu)}(x,\mu,\gamma,\varphi_0(x,\mu,\gamma)),
\end{equation}
and
\begin{equation}
\nabla_{\varphi_0}^{(\gamma)}(x,\mu,\gamma)=-J_F^{(v)}(x,\mu,\gamma,\varphi_0(x,\mu,\gamma))^{-1}\nabla_F^{(\gamma)}(x,\mu,\gamma,\varphi_0(x,\mu,\gamma)).
\end{equation}
When $c\neq 0$, the derivatives of $\varphi$ are deduced from those of $\varphi_0$ using (\ref{eq:link_phi_phi0}):
\begin{equation}
J_{\varphi}^{(x)}(x,\mu,\gamma)=-J_F^{(v)}(x-c,\mu,\gamma,\varphi(x,\mu,\gamma)-c)^{-1}J_F^{(x)}(x-c,\mu,\gamma,\varphi(x,\mu,\gamma)-c),
\end{equation}
\begin{equation}
\nabla_{\varphi}^{(\mu)}(x,\mu,\gamma)=-J_F^{(v)}(x-c,\mu,\gamma,\varphi(x,\mu,\gamma)-c)^{-1}\nabla_F^{(\mu)}(x-c,\mu,\gamma,\varphi(x,\mu,\gamma)-c),
\end{equation}
and
\begin{equation}
\nabla_{\varphi}^{(\gamma)}(x,\mu,\gamma)=-J_F^{(v)}(x-c,\mu,\gamma,\varphi(x,\mu,\gamma)-c)^{-1}\nabla_F^{(\gamma)}(x-c,\mu,\gamma,\varphi(x,\mu,\gamma)-c),
\end{equation}
which lead to (\ref{eq:bl3})-(\ref{eq:bl5}).
\end{proof}

\noindent Similarly to the previous case, the three solutions to (\ref{eq:bl2}) can be obtained by using the Cardano formula. The form of the resulting proximity operator for $n=2$ is plotted on Figure~\ref{fig:prox_barriers} (right) for $\alpha=0.7$, $c = 0$, and several values of $\gamma\mu$ and $x$; for symmetry reasons, only the first component $\left(\mathrm{prox}_{\gamma\mu\mathcal{B}}(x)\right)_1$ is represented.
\\\\
As shown in this section, the proximity operator of the barrier is easily computable and differentiable for several classic types of constraints. Next, we detail the proposed approach in Section~\ref{sec:net}.

\section{iRestNet architecture}
\label{sec:net}
\subsection{Overview}

Our proposal is to adopt a supervised learning strategy in order to determine, from a training set of images, an optimal setting for the parameters of Algorithm~\ref{algo:FB_IPM}, which should lead to an optimal image restoration quality. To this aim, Algorithm~\ref{algo:FB_IPM} is unfolded over $K$ iterations and the regularization parameter $\lambda$ is untied across the network, so as to provide more flexibility to the approach \cite{hershey2014deep}. The update rule at a given iteration $k\in\{0,\ldots,K-1\}$ reads
\begin{equation}
x_{k+1} = \mathcal{A}\left(x_k,\mu_k,\gamma_k,\lambda_k\right) 
\end{equation}
with
\begin{equation}
\mathcal{A}\left(x_k,\mu_k,\gamma_k,\lambda_k\right) =\mathrm{prox}_{\gamma_k \mu_k \mathcal{B}}\left(x_k-\gamma_k\nabla_1h\left(x_k,y,\lambda_k\right)\right). 
\label{eq:mathcalA}
\end{equation}

\begin{figure}
\includegraphics[width=1\textwidth]{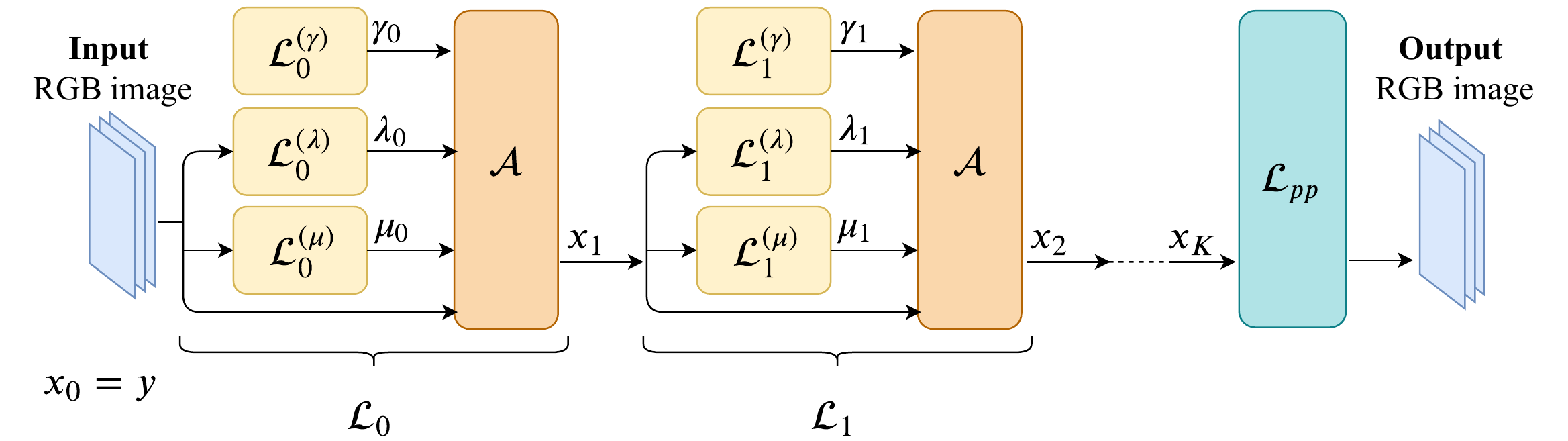}
\caption{iRestNet global architecture.}
\label{fig:net_archi}
\end{figure}

\noindent For every $k\in\{0,\ldots,K-1\}$, we build the $k$-th \textit{layer} $\mathcal{L}_k$ as the association of three hidden structures, $\mathcal{L}^{(\mu)}_k$, $\mathcal{L}^{(\gamma)}_k$ and $\mathcal{L}^{(\lambda)}_k$, followed by the update $\mathcal{A}$. Structures $\mathcal{L}^{(\mu)}_k$, $\mathcal{L}^{(\gamma)}_k$, and $\mathcal{L}^{(\lambda)}_k$ aim at inferring the barrier parameter $\mu_k$, the stepsize $\gamma_k$ and the regularization weight $\lambda_k$, respectively. Since a finite number $K$ of layers (i.e., updates) is used, the convergence of the resulting scheme is not an issue. Note that we also allow in our framework the use of a post--processing step after going through the $K$ layers, that will be denoted as $\mathcal{L}_{\rm pp}$. The resulting architecture is depicted in Figure~\ref{fig:net_archi}.

\subsection{Hidden structures}
Let us now provide more details about the hidden structures. For every $k\in\{0,\ldots,K-1\}$, the outputs $(\mu_k,\gamma_k,\lambda_k)$ of the structures $\mathcal{L}^{(\mu)}_k$, $\mathcal{L}^{(\gamma)}_k$, and $\mathcal{L}^{(\lambda)}_k$ must be positive. To enforce such constraint, we use the Softplus function \cite{dugas2001incorporating}, defined below, which can be viewed as a smooth approximation of the ReLU activation function:
\begin{equation}
(\forall z\in\mathbb{R})\quad\mathrm{Softplus}(z)=\ln(1+\exp(z)).
\end{equation}
Unlike the ReLU, the gradient of Softplus is never strictly equal to zero, which, given our architecture, helps to propagate the gradient through the network. 
The stepsize is estimated as follows, 
\begin{equation}
\gamma_k=\mathcal{L}_k^{(\gamma)}=\mathrm{Softplus}\left(a_k\right),
\label{eq:L_k_gamma}
\end{equation}
where $a_k$ is a scalar parameter of the network learned during training. 
The barrier parameter is obtained using two convolutional and average pooling layers followed by a fully connected layer. The detailed architecture of $\mathcal{L}^{(\mu)}_k$ is depicted in Figure~\ref{fig:lkmu}. 
\\
Traditional methods for estimating the regularization parameter generally depend on the signal-to-noise ratio and on the image statistics \cite{vogel2002computational}. For most applications the noise level is unknown and can be estimated, for instance, by applying a median filter over the wavelet diagonal coefficients of the image \cite{ramadhan2017image}. This strategy is used in the numerical experiments presented in Section~\ref{sec:exp}. The advantage is to yield a network which can handle datasets for which the signal-to-noise ratio is unknown and can vary within a reasonable range.
%
The expression of $\mathcal{L}_k^{(\lambda)}$ is then problem--dependent since its expression depends on the regularization function $\mathcal{R}$. A specific example is given in Section~\ref{sec:exp} for the total variation regularization function. 
\\
Regarding the post-processing step $\mathcal{L}_{\rm pp}$, its detailed architecture also depends on the task to be performed. An example is provided in Section~\ref{sec:exp} for the case of deblurring: the purpose of $\mathcal{L}_{\rm pp}$ is then to remove remaining artifacts using convolutional layers, residual learning, batch normalization, and dilation.

\begin{figure}
\centering
\includegraphics[width=0.8\textwidth]{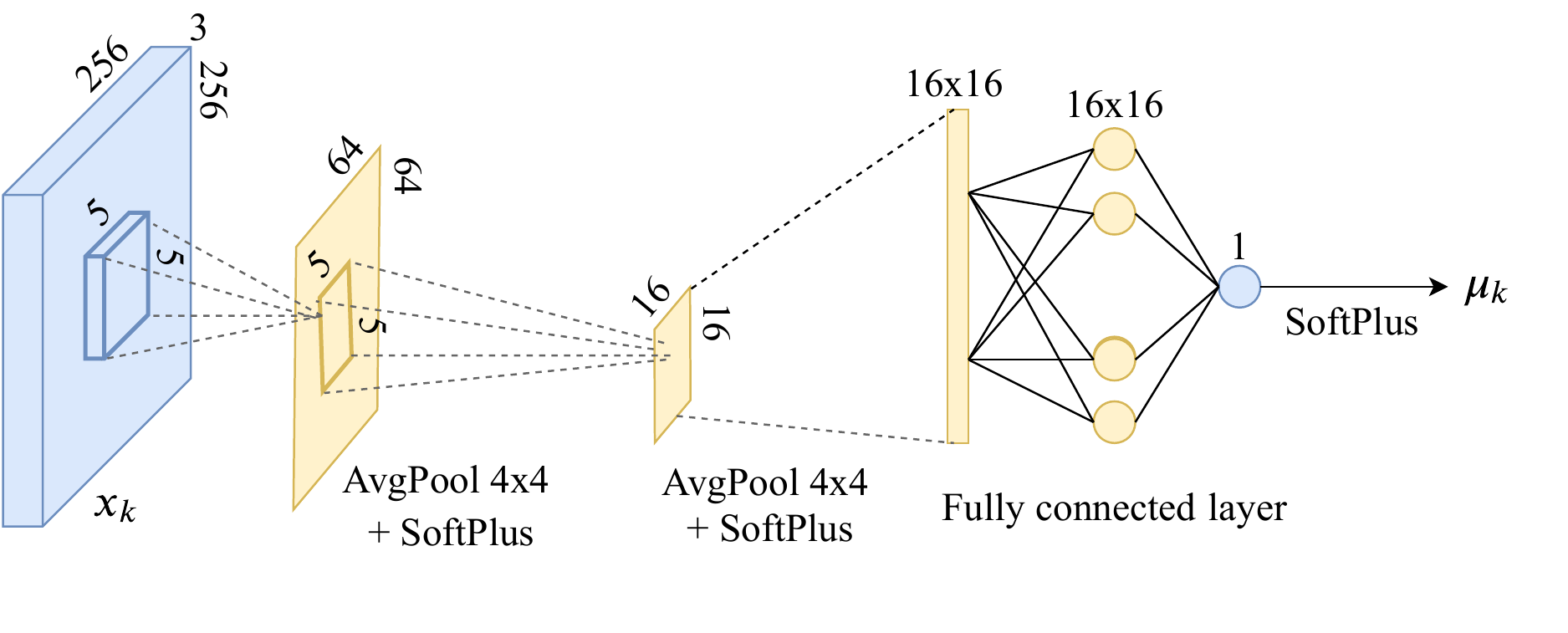}
\caption{Architecture of $\mathcal{L}_k^{(\mu)}$.}
\label{fig:lkmu}
\end{figure}

\subsection{Differential calculus}
To train the neural network presented in Figure~\ref{fig:net_archi} using gradient descent, one needs to compute the gradient of $x_K$ with respect to the different parameters of the network. The chain rule can be applied since most of the steps in the network correspond to operators having straightforward derivatives. However, particular care should be taken when
 differentiating $\mathcal{A}$. Since $f$ and $\mathcal{R}$ are assumed to be twice differentiable, the only area of concern is related to $\mathrm{prox}_{\gamma\mu\mathcal{B}}$. If $\mathrm{prox}_{\gamma\mu\mathcal{B}}$ is simple enough, automatic differentiation~\cite{paszke2017autodiff} can be used. Otherwise, as shown in Section~\ref{sec:prox_computation}, for common examples of barrier functions, the differential of this term is well-defined. The corresponding expressions for the derivatives are provided in Propositions~\ref{prop:affine}--\ref{prop:l2}.

\section{Network stability}
\label{sec:network_stability}
One critical issue concerning neural networks is to guarantee that their performance remains acceptable when the input is perturbed. For example, the authors of \cite{szegedy2013intriguing} show that the class prediction made by AlexNet can be arbitrarily changed by using small nonrandom perturbations on the test image. A recent work \cite{combettes2018deep} provides a theoretical framework which enables to evaluate the robustness of a network. In this section, we will focus on a subclass of problem \eqref{pb:var_pb2} where both $f(\cdot,y)$ and $\mathcal{R}$ are quadratic functions. After highlighting the similarities between the proposed architecture and generic feedforward networks in that case, we will give explicit conditions under which the robustness of the proposed architecture is ensured.

\subsection{Relation to generic deep neural networks}
\label{subsec:relation_gen_net}
Although the proposed architecture may seem specific to Algorithm~\ref{algo:FB_IPM}, it is actually very similar to generic feedforward neural networks. Classical feedforward (acyclic) architectures \cite{schmidhuber2015deep} can be expressed as $R_{K-1}\circ(W_{K-1}\cdot + b_{K-1})\circ\cdots\circ R_0\circ(W_0\cdot + b_0)$, where $(R_k)_{0\leq k\leq K-1}$ are nonlinear activation functions, $(W_k)_{0\leq k\leq K-1}$ are weight operators and $(b_k)_{0\leq k\leq K-1}$ are bias parameters. Let us show that iRestNet actually shares a similar structure. For the sake of simplicity, we will consider the variational problem,
\begin{equation}
\minimize\limits_{x\in \mathcal{C}}\frac{1}{2}\|Hx-y\|^2+\frac{\lambda}{2}\|Dx\|^2,
\label{pb:net_stability}
\end{equation}
where $y\in\mathbb{R}^n$, $H\in\mathbb{R}^{n\times n}$, $D\in\mathbb{R}^{n\times n}$, and $\mathcal{C}$ is defined as in \eqref{def:feasible_set}. Moreover, we assume that no post--processing layer $\mathcal{L}_{\rm pp}$ is used. Following the notation of Section~\ref{sec:net}, $(\forall k\in\{0,\ldots,K-1\})$ $(\mu_k,\gamma_k,\lambda_k)$ are given positive real numbers, $K$ being the number of layers of the network. Then, for every $k\in\{0,\ldots,K-1\}$, layer $\mathcal{L}_k$ corresponds to the following update,
\begin{align}
x_{k+1}&= \mathrm{prox}_{\gamma_k\mu_k\mathcal{B}}\left(x_{k}-\gamma_k\left(H^\top\left(Hx_{k}-y\right)+\lambda_kD^\top Dx_{{k}}\right)\right)\nonumber\\
&=\mathrm{prox}_{\gamma_k\mu_k\mathcal{B}}\left(\left[\mathbb{I}_n-\gamma_k\left(H^\top H +\lambda_k D^\top D\right)\right]x_{k}+\gamma_kH^\top y\right),
\end{align}
where $\mathcal{B}$ is defined as in \eqref{eq:barrier}. For every $k\in\{0,\ldots,K-1\}$, we set 
\begin{equation}
W_{k}=\mathbb{I}_n-\gamma_{k}\left(H^\top H +\lambda_{k} D^\top D\right),~~b_{k} = \gamma_{k}H^\top y,~~\mathrm{and}~~R_{k}=\mathrm{prox}_{\gamma_{k}\mu_{k}\mathcal{B}}.
\label{eq:weights_bias_act}
\end{equation}
Then, the $K$-layer network $\mathcal{L}_{K-1}\circ\cdots\circ\mathcal{L}_0$ is equivalent to $R_{K-1}\circ(W_{K-1}\cdot + b_{K-1})\circ\cdots\circ R_0\circ(W_0\cdot + b_0)$, where $(W_k)_{0\leq k\leq K-1}$ and $(b_k)_{0\leq k\leq K-1}$ are interpreted as weight operators and bias parameters, respectively. The operators $(R_k)_{0\leq k \leq K-1}$ defined in \eqref{eq:weights_bias_act} can be viewed as specific activation functions since, as shown in \cite{combettes2018deep}, every standard activation function can be derived from a proximity operator. In addition, using \cite[Proposition~24.8(iii)]{bauschke2017convex}, for every $k\in\{0,\ldots,K-1\}$, $R_k$ can be re-written as the sum of a \textit{proximal activation operator} \cite[Definition~2.20]{combettes2018deep} and a bias.


\subsection{Preliminary results}
Before stating our main stability theorem, we recall the result from \cite[Lemma~3.3]{combettes2018deep} in Proposition~\ref{prop:theta} below. We then derive Proposition~\ref{prop:condition31}, which will appear useful when addressing the robustness of the global network. In the following, $\mathcal{S}_n$ denotes the set of symmetric matrices in $\mathbb{R}^{n\times n}$ and, for every $W \in \mathbb{R}^{n\times n}$, $\| W \|$ denotes its spectral norm.

\begin{proposition}{\rm \cite{combettes2018deep}}
Let $K\geq 1$ be an integer and set $\theta_{-1}=1$. 
For every $k\in\{0,\ldots,K-1\}$, let $W_k\in\mathbb{R}^{n\times n}$ and let $\theta_k$ be defined by 
\begin{equation}
\begin{split}
\theta_k=\|W_k\circ\cdots\circ W_0\| + \sum_{\ell=0}^{k-1}&\sum_{0\leq j_0<\cdots<j_\ell\leq k-1}\|W_k\circ\cdots\circ W_{j_\ell+1}\|\times\\
&\|W_{j_\ell}\circ\cdots\circ W_{j_{\ell-1}+1}\|\cdots\|W_{j_0}\circ\cdots\circ W_0\|.
\end{split}
\end{equation}
Then, for every $k\in\{0,\ldots,K-1\}$,
$\theta_{k}=\sum_{\ell=0}^{k}\theta_{\ell-1}\left\|W_k\circ\ldots\circ W_l\right\|$.
\label{prop:theta}
\end{proposition}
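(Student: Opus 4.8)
The statement is a purely combinatorial identity relating the quantity $\theta_k$ — defined as a sum over all ordered subsets of $\{0,\ldots,k-1\}$ of products of spectral norms of consecutive blocks $W_j\circ\cdots\circ W_i$ — to the compact recursive form $\theta_k=\sum_{\ell=0}^{k}\theta_{\ell-1}\|W_k\circ\cdots\circ W_\ell\|$, with the convention $\theta_{-1}=1$. Since the proposition is quoted verbatim from \cite[Lemma~3.3]{combettes2018deep}, the cleanest route is simply to cite that reference; but if a self-contained argument is wanted, the plan is to proceed \textbf{by induction on $k$}.

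\emph{Base case.} For $k=0$ the defining sum has an empty inner double sum (the index $\ell$ runs over an empty range $0\le\ell\le-1$), so $\theta_0=\|W_0\|$. The recursive formula gives $\theta_0=\theta_{-1}\|W_0\|=\|W_0\|$, so the two agree.

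\emph{Inductive step.} Assume the claimed closed form holds for $\theta_0,\ldots,\theta_{k-1}$. The key bookkeeping step is to \textbf{classify the terms in the definition of $\theta_k$ according to the position of the last ``cut''}, i.e.\ according to the largest index $j_\ell$ appearing in a given ordered subset $0\le j_0<\cdots<j_\ell\le k-1$ (together with the ``no-cut'' term $\|W_k\circ\cdots\circ W_0\|$, which we regard as the case where the last block starts at $0$). Setting $\ell$ for the starting index of the final block, the factor $\|W_k\circ\cdots\circ W_\ell\|$ is pulled out, and what remains — the sum over all ordered subsets of $\{0,\ldots,\ell-1\}$ of products of the remaining consecutive blocks — is exactly the definition of $\theta_{\ell-1}$ (and equals $\theta_{-1}=1$ when $\ell=0$). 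Summing over all possible starting indices $\ell\in\{0,\ldots,k\}$ of the final block yields $\theta_k=\sum_{\ell=0}^{k}\theta_{\ell-1}\|W_k\circ\cdots\circ W_\ell\|$, which is the desired identity. Note this step does not even need the inductive hypothesis on the \emph{closed form} of the earlier $\theta_j$; it only needs their \emph{definition}, so in fact the recursion follows directly by reorganizing the sum. The induction is then merely a way to package the claim that the reorganized sum is well defined for every $k$.

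\textbf{Main obstacle.} The only real difficulty is notational: one must check carefully that the partition of the index set of ordered tuples $(j_0,\ldots,j_\ell)$ by the value of the top element $j_\ell$ is a genuine bijection onto $\bigsqcup_{\ell} \{$starting index $\ell\}\times\{$ordered subsets of $\{0,\ldots,\ell-1\}\}$, with the boundary conventions ($\theta_{-1}=1$, empty product $=1$, and the ``no-cut'' term) correctly absorbed into the $\ell=0$ and $\ell=k$ endpoints. Once the correspondence is set up, the algebra is immediate. I would therefore present the proof as a short reindexing argument, citing \cite[Lemma~3.3]{combettes2018deep} for the original statement.
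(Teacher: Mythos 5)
Your proposal is correct, and in fact it matches the paper exactly in the only sense that matters here: the paper gives no proof of Proposition~\ref{prop:theta} at all, stating explicitly that it merely recalls \cite[Lemma~3.3]{combettes2018deep}, which is precisely the first option you propose. Your self-contained argument is also sound: grouping the ordered tuples $0\le j_0<\cdots<j_\ell\le k-1$ by their largest element $j_\ell=\ell-1$ (equivalently, by the starting index $\ell$ of the final block $W_k\circ\cdots\circ W_\ell$), pulling out the factor $\|W_k\circ\cdots\circ W_\ell\|$, and recognizing the leftover sum over subsets of $\{0,\ldots,\ell-2\}$ as the literal definition of $\theta_{\ell-1}$ (with the empty-subset term absorbed into $\ell=0$ via $\theta_{-1}=1$) gives the recursion directly, with no induction needed — exactly as you observe. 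The only thing to be careful about when writing it out is the off-by-one between the cut position $j_\ell$ and the block start $\ell=j_\ell+1$, which you flag correctly as the main notational hazard.
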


\begin{proposition}
Let $K\geq 1$, $\theta>0$, and $\alpha\in[1/2,1]$. 
Let $W\in \mathcal{S}_{n}$ and let $\beta_-$ and $\beta_+$ denote the smallest and largest eigenvalues of $W$, respectively. Then, the condition
\begin{equation}
\|W-2^{K}(1-\alpha)\mathbb{I}_n\|-\|W\|+2\theta\leq 2^{K}\alpha
\label{eq:condition31}
\end{equation}
is satisfied if and only if one of the following conditions holds:
\begin{enumerate}
\item\label{prop:i} $\beta_++\beta_-\leq 0$ and $\theta\leq 2^{K-1}(2\alpha-1)$;
\item\label{prop:ii} $0\leq\beta_++\beta_-\leq 2^{K+1}(1-\alpha)$ and $2\theta\leq \beta_++\beta_-+2^{K}(2\alpha-1)$;
\item\label{prop:iii} $2^{K+1}(1-\alpha)\leq \beta_++\beta_-$ and $\theta\leq 2^{K-1}$.
\end{enumerate}
\label{prop:condition31}
\end{proposition}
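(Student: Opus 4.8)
The plan is to reduce the spectral-norm condition \eqref{eq:condition31} to a one-dimensional inequality in the extreme eigenvalues of $W$, and then dispatch by a three-way case split on $\beta_-+\beta_+$. First I would set $t:=2^{K}(1-\alpha)$, so that $t\in[0,2^{K-1}]$ (because $\alpha\in[1/2,1]$) and $2^{K}\alpha=2^{K}-t$. Since $W\in\mathcal{S}_n$, its spectral norm equals $\max\{|\beta_-|,|\beta_+|\}$; likewise $W-t\mathbb{I}_n\in\mathcal{S}_n$ has $\beta_--t$ and $\beta_+-t$ as its extreme eigenvalues, whence $\|W-t\mathbb{I}_n\|=\max\{|\beta_--t|,|\beta_+-t|\}$, the maximum of $\lambda\mapsto|\lambda-t|$ over the interval $[\beta_-,\beta_+]$ being attained at an endpoint. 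Thus \eqref{eq:condition31} is equivalent to
\[
\max\{|\beta_--t|,|\beta_+-t|\}-\max\{|\beta_-|,|\beta_+|\}\;\le\;2^{K}-t-2\theta .
\]

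Next I would record the elementary identity that, for any $s\in\mathbb{R}$ and since $\beta_-\le\beta_+$,
\[
\max\{|\beta_--s|,|\beta_+-s|\}=
\begin{cases}
\beta_+-s & \text{if } s\le m,\\
s-\beta_- & \text{if } s\ge m,
\end{cases}
\]
where $m:=\tfrac12(\beta_-+\beta_+)$ (this is simply the distance from $s$ to the farther of $\beta_-,\beta_+$). Applying it with $s=0$ and with $s=t\ge0$, the left-hand side of the displayed inequality is determined by the position of $m$ relative to $0$ and $t$: if $m\le0$, both evaluations lie on the $s-\beta_-$ branch and the left-hand side equals $t$; if $0\le m\le t$, the $s=0$ value is $\beta_+$ while the $s=t$ value is $t-\beta_-$, so the left-hand side is $t-\beta_--\beta_+$; if $m\ge t$, both values lie on the $\beta_+-s$ branch and the left-hand side equals $-t$. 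Substituting each into the displayed inequality and using $2t=2^{K+1}(1-\alpha)$ and $2^{K}-2t=2^{K}(2\alpha-1)$, one obtains exactly $\theta\le 2^{K-1}(2\alpha-1)$, $2\theta\le\beta_-+\beta_++2^{K}(2\alpha-1)$, and $\theta\le 2^{K-1}$, respectively; moreover the case hypotheses $m\le0$, $0\le m\le t$, $m\ge t$ are precisely the eigenvalue conditions $\beta_-+\beta_+\le0$, $0\le\beta_-+\beta_+\le2^{K+1}(1-\alpha)$, $\beta_-+\beta_+\ge2^{K+1}(1-\alpha)$ appearing in \ref{prop:i}, \ref{prop:ii}, \ref{prop:iii}. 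Since these three regimes cover all of $\mathbb{R}$, this yields the claimed ``if and only if''.

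I do not expect a genuine obstacle: the argument is a careful but routine case analysis. The two points needing attention are (a) the passage $\|W-t\mathbb{I}_n\|=\max\{|\beta_--t|,|\beta_+-t|\}$, which uses only symmetry of $W$, and (b) the bookkeeping ensuring the case boundaries land exactly on $0$ and $2^{K+1}(1-\alpha)$ and the resulting scalar inequalities match the stated ones — including checking, via the continuity of $s\mapsto\max\{|\beta_--s|,|\beta_+-s|\}$, that the two applicable conclusions agree on the overlapping boundaries $\beta_-+\beta_+=0$ and $\beta_-+\beta_+=2^{K+1}(1-\alpha)$.
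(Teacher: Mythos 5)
Your proposal is correct and follows essentially the same route as the paper: both reduce \eqref{eq:condition31} to scalar inequalities via $\|W\|=\max\{\beta_+,-\beta_-\}$ and $\|W-2^{K}(1-\alpha)\mathbb{I}_n\|=\max\{\beta_+-2^{K}(1-\alpha),\,-\beta_-+2^{K}(1-\alpha)\}$, then split into the same three cases on $\beta_-+\beta_+$ relative to $0$ and $2^{K+1}(1-\alpha)$. Your packaging of the case split through the midpoint $m=\tfrac12(\beta_-+\beta_+)$ and the shift $t=2^{K}(1-\alpha)$ is only a cosmetic reorganization of the paper's argument.
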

\begin{proof}
Let $\alpha\in[1/2,1]$. Since $W\in\mathcal{S}_n$, we have, $\|W\|=\max\{\beta_+,-\beta_-\}$, and
\begin{equation}
\|W-2^{K}(1-\alpha)\mathbb{I}_n\|=\max\left\{\beta_+-2^{K}(1-\alpha),-\beta_-+2^{K}(1-\alpha)\right\}.
\label{eq:proof0}
\end{equation}
Three different cases arise that we review below.
\begin{enumerate}

\item[(i)] If $\beta_++\beta_-\leq 0$ then $\|W\|=-\beta_-$ and 
\begin{equation}
\beta_+-2^{K}(1-\alpha)\leq -\beta_-+2^{K}(1-\alpha).
\label{eq:proof1}
\end{equation}
From \eqref{eq:proof0} and \eqref{eq:proof1}, we deduce that $\|W-2^{K}(1-\alpha)\mathbb{I}_n\|=-\beta_-+2^{K}(1-\alpha)$. Replacing $\|W\|$ and $\|W-2^{K}(1-\alpha)\mathbb{I}_n\|$ by their value in \eqref{eq:condition31} leads to Proposition~\ref{prop:condition31}\eqref{prop:i}.

\item[(ii)] If $0\leq\beta_++\beta_-\leq 2^{K+1}(1-\alpha)$ then $\|W\|=\beta_+$ and \eqref{eq:proof1} is satisfied. Hence, $\|W-2^{K}(1-\alpha)\mathbb{I}_n\|=-\beta_-+2^{K}(1-\alpha)$. Replacing $\|W\|$ and $\|W-2^{K}(1-\alpha)\mathbb{I}_n\|$ by their value in \eqref{eq:condition31} leads to Proposition~\ref{prop:condition31}\eqref{prop:ii}.

\item[(iii)] If $2^{K+1}(1-\alpha)\leq \beta_++\beta_-$ then $\|W\|=\beta_+$ and
\begin{equation}
\beta_+-2^{K}(1-\alpha)\geq -\beta_-+2^{K}(1-\alpha).
\label{eq:proof2}
\end{equation}
From \eqref{eq:proof0} and \eqref{eq:proof2}, we deduce that $\|W-2^{K}(1-\alpha)\mathbb{I}_n\|=\beta_+-2^{K}(1-\alpha)$. Replacing $\|W\|$ and $\|W-2^{K}(1-\alpha)\mathbb{I}_n\|$ by their value in \eqref{eq:condition31} leads to Proposition~\ref{prop:condition31}\eqref{prop:iii}, which completes the proof.
\end{enumerate}
\end{proof}

\subsection{Averaged operator}
The notion of nonexpansiveness, whose definition is recalled below, plays a central role in the analysis of the robustness of nonlinear operators. 
We recall that $T:\mathbb{R}^n\rightarrow \mathbb{R}^n$ is nonexpansive if it is Lipschitz continuous with constant $1$, i.e., 
\begin{equation}
(\forall x\in \mathbb{R}^n)(\forall y\in \mathbb{R}^n)~~\|T(x)-T(y)\|\leq \|x-y\|.
\end{equation}

In the present study we make use of the notion of averaged operator \cite{bauschke2017convex}, which is stronger than nonexpansiveness. $T$ is $\alpha$--averaged with $\alpha\in[0,1]$, if there exists a nonexpansive operator $R:\mathbb{R}^n\rightarrow \mathbb{R}^n$ such that $T=(1-\alpha)I_n+\alpha R$, where $I_n$ denotes the identity operator of $\mathbb{R}^n$.

\noindent The following property provides an upper bound of the effect of an input perturbation, which depends on the averageness constant $\alpha$. In particular, the smaller $\alpha$ is, the more stable the operator is.

\begin{proposition}{\rm \cite[Remark~4.34, Proposition~4.35]{bauschke2017convex}} Let $T:\mathbb{R}^n\rightarrow \mathbb{R}^n$. 
\begin{enumerate}
\item[(i)] If $T$ is averaged, then it is nonexpansive.
\item[(ii)] Let $\alpha\in \ ]0,1]$. $T$ is $\alpha$--averaged if and only if for every $x\in \mathbb{R}^n$ and $y\in \mathbb{R}^n$,
\begin{equation}
\|T(x)-T(y)\|^2\leq \|x-y\|^2  -\frac{1-\alpha}{\alpha}\|(I_n-T)(x)-(I_n-T)(y)\|^2.
\end{equation} 
\end{enumerate}
\label{prop:averaged_bound}
\end{proposition}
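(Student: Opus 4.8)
The plan is to argue straight from the definition: $T$ being $\alpha$--averaged means $T=(1-\alpha)I_n+\alpha R$ for some nonexpansive $R$, so the whole proposition amounts to translating ``$R$ is nonexpansive'' into an equivalent statement about $T$. The only tool needed is the elementary identity
\[
\|(1-\alpha)p+\alpha q\|^2=(1-\alpha)\|p\|^2+\alpha\|q\|^2-\alpha(1-\alpha)\|p-q\|^2,
\]
valid for all $p,q\in\mathbb{R}^n$ and all $\alpha\in[0,1]$, which follows by expanding both sides.

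For part~(i), if $\alpha=0$ then $T=I_n$ and there is nothing to prove; otherwise fix $x,y\in\mathbb{R}^n$, apply the triangle inequality to $T(x)-T(y)=(1-\alpha)(x-y)+\alpha\bigl(R(x)-R(y)\bigr)$, and bound $\|R(x)-R(y)\|\le\|x-y\|$. The two terms collapse to $(1-\alpha)\|x-y\|+\alpha\|x-y\|=\|x-y\|$, which is exactly nonexpansiveness of $T$.

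For part~(ii), fix $x,y$ and apply the identity with $p=x-y$ and $q=R(x)-R(y)$. Since $I_n-T=\alpha(I_n-R)$, we get $(I_n-T)(x)-(I_n-T)(y)=\alpha(p-q)$, hence $\|p-q\|^2=\alpha^{-2}\|(I_n-T)(x)-(I_n-T)(y)\|^2$. Substituting into the identity gives the exact equality
\[
\|T(x)-T(y)\|^2=(1-\alpha)\|x-y\|^2+\alpha\|R(x)-R(y)\|^2-\frac{1-\alpha}{\alpha}\|(I_n-T)(x)-(I_n-T)(y)\|^2 .
\]
Now, for $\alpha>0$, ``$R$ is nonexpansive'' is equivalent to $\alpha\|R(x)-R(y)\|^2\le\alpha\|x-y\|^2$ for all $x,y$; inserting this bound into the displayed equality yields the claimed inequality, and conversely the claimed inequality forces $\alpha\|R(x)-R(y)\|^2\le\alpha\|x-y\|^2$, i.e.\ $R$ nonexpansive, so $T$ is $\alpha$--averaged. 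Both implications are thus established.

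The argument is essentially bookkeeping, so there is no genuine obstacle; the one point to keep straight is the relation $I_n-T=\alpha(I_n-R)$ and the resulting factor $\alpha^{-2}$ when rewriting $\|p-q\|^2$ in terms of $I_n-T$, which is where a sign or constant could easily slip. This is consistent with the result being quoted from \cite{bauschke2017convex} rather than reproved in detail here.
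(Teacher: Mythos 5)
Your proof is correct, and it is essentially the standard argument from Bauschke--Combettes; the paper itself offers no proof here, merely citing \cite[Remark~4.34, Proposition~4.35]{bauschke2017convex}. The identity, the substitution $p=x-y$, $q=R(x)-R(y)$, and the relation $I_n-T=\alpha(I_n-R)$ are all handled correctly, and the resulting equality does make the claimed inequality equivalent to nonexpansiveness of $R$. The only presentational point worth tightening is the converse direction of (ii): there you must first \emph{define} $R:=\alpha^{-1}\bigl(T-(1-\alpha)I_n\bigr)$ (so that $T=(1-\alpha)I_n+\alpha R$ holds by construction) and then observe that your equality, which is valid for any $T$ with this $R$, turns the hypothesized inequality into $\|R(x)-R(y)\|\le\|x-y\|$; as written, $R$ appears in the converse before it has been introduced, but this is a matter of ordering rather than a gap.
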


\subsection{Robustness of iRestNet to an input perturbation}

Let us consider problem~\eqref{pb:net_stability}, where we assume additionally that $H^\top H$ and $D^\top D$ are diagonalizable in a same basis denoted $\mathcal{P}$. The latter is satisfied for instance if $H$ and $D$ are the results of cyclic convolutive operators.
Theorem~\ref{thm:averaged} below gives sufficient conditions under which the proposed network applied to problem~\eqref{pb:net_stability} is averaged. 

\begin{theorem}
Let $\alpha\in[1/2,1]$, $(W_k,b_k,R_k)_{0\leq k\leq K-1}$ be defined by \eqref{eq:weights_bias_act}, and $(\theta_{k})_{-1\le k \le K-1}$ be defined as in Proposition~\ref{prop:theta}. Let $\beta_-$ and $\beta_+$ be the smallest and largest eigenvalues of $W=W_{K-1}\circ\cdots\circ W_0$, respectively. For every $p \in \{1,\ldots, n\}$ and every $k\in\{0,\ldots,K-1\}$, let $\beta_k^{(p)}=1-\gamma_k\left(\beta_H^{(p)}+\lambda_k\beta_D^{(p)}\right)$,
where $\beta_H^{(p)}$ and $\beta_D^{(p)}$ denote the $p^{\text{th}}$ eigenvalue of $H^\top H$ and $D^\top D$ in $\mathcal{P}$, respectively. Then, $\beta_-$, $\beta_+$, and $(\forall k\in\{0,\ldots,K-1\})$ $\theta_{k}$ can be computed as follows:

\begin{equation}
\beta_-=\min_{1\leq p\leq n}\prod_{k=0}^{K-1}\beta_k^{(p)}
,~
\beta_+=\max_{1\leq p\leq n}\prod_{k=0}^{K-1}\beta_k^{(p)}
~
\text{and}~~\theta_{k}=\sum_{l=0}^k \theta_{l-1}\max_{1\leq q_l\leq n}\left|\beta_k^{(q_l)}\ldots\beta_l^{(q_l)}\right|.
\label{eq:thm1_eq1}
\end{equation}
In addition, if one of the following conditions is satisfied
\begin{enumerate}
\item[(i)]\label{thm:cond1} $\beta_++\beta_-\leq 0$ and $\theta_{K-1}\leq 2^{K-1}(2\alpha-1)$;
\item[(ii)]\label{thm:cond2} $0\leq\beta_++\beta_-\leq 2^{K+1}(1-\alpha)$ and $2\theta_{K-1}\leq \beta_++\beta_-+2^{K}(2\alpha-1)$;
\item[(iii)]\label{thm:cond3} $2^{K+1}(1-\alpha)\leq \beta_++\beta_-$ and $\theta_{K-1}\leq 2^{K-1}$,
\end{enumerate} 
then the operator $R_{K-1}\circ(W_{K-1}\cdot+b_{K-1})\circ\cdots\circ R_0\circ(W_0\cdot+b_0)$ is $\alpha$--averaged. 
\label{thm:averaged}
\end{theorem}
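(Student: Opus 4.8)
The plan is to split the argument into two parts, matching the two claims of the statement: first the closed-form expressions \eqref{eq:thm1_eq1} for $\beta_\pm$ and $(\theta_k)_k$, then the $\alpha$--averagedness of the network under conditions (i)--(iii).

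For the first part, I would begin by noting that $H^\top H$ and $D^\top D$ are symmetric positive semidefinite and, being simultaneously diagonalizable, commute; hence the common eigenbasis $\mathcal{P}$ may be taken orthonormal. Consequently, for each $k\in\{0,\ldots,K-1\}$, the weight operator $W_k=\mathbb{I}_n-\gamma_k(H^\top H+\lambda_k D^\top D)$ from \eqref{eq:weights_bias_act} is symmetric and diagonal in $\mathcal{P}$, with $\beta_k^{(p)}$ as eigenvalue associated with the $p$-th vector of $\mathcal{P}$. Since all the $W_k$ share the eigenbasis $\mathcal{P}$ they pairwise commute, so every composition $W_k\circ\cdots\circ W_\ell$ (with $0\le \ell\le k\le K-1$) is again symmetric and diagonal in $\mathcal{P}$, with eigenvalues $(\beta_k^{(p)}\cdots\beta_\ell^{(p)})_{1\le p\le n}$. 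Taking $\ell=0$ and $k=K-1$ shows that $W=W_{K-1}\circ\cdots\circ W_0$ has eigenvalues $\prod_{k=0}^{K-1}\beta_k^{(p)}$, whence the announced formulas for $\beta_-$ and $\beta_+$. Moreover the spectral norm of a symmetric matrix equals the largest modulus of its eigenvalues, so $\|W_k\circ\cdots\circ W_\ell\|=\max_{1\le p\le n}|\beta_k^{(p)}\cdots\beta_\ell^{(p)}|$; substituting this into the recursion $\theta_k=\sum_{\ell=0}^k\theta_{\ell-1}\|W_k\circ\cdots\circ W_\ell\|$ of Proposition~\ref{prop:theta} (with $\theta_{-1}=1$) yields the stated expression for $\theta_k$, the summation index being renamed $q_\ell$ to stress that the maximizer may depend on $\ell$.

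For the second part, each activation $R_k=\mathrm{prox}_{\gamma_k\mu_k\mathcal{B}}$ is a proximity operator, hence firmly nonexpansive, i.e. $1/2$--averaged; by \cite[Proposition~24.8(iii)]{bauschke2017convex} it decomposes as the sum of a proximal activation operator in the sense of \cite[Definition~2.20]{combettes2018deep} and a bias, so the network $R_{K-1}\circ(W_{K-1}\cdot+b_{K-1})\circ\cdots\circ R_0\circ(W_0\cdot+b_0)$ is exactly of the type covered by the averagedness criterion of \cite{combettes2018deep}. That criterion guarantees that this composition is $\alpha$--averaged whenever condition \eqref{eq:condition31} holds with $W=W_{K-1}\circ\cdots\circ W_0$ and $\theta=\theta_{K-1}$ (the factor $2^K$ in \eqref{eq:condition31} coming from the product over the $K$ layers of the reciprocals of their averagedness constants, all equal to $1/2$). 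Since $W\in\mathcal{S}_n$ by the first part, Proposition~\ref{prop:condition31} shows that \eqref{eq:condition31} is equivalent to the disjunction of (i), (ii), (iii) expressed through $\beta_+$, $\beta_-$ and $\theta_{K-1}$, which completes the proof.

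The main obstacle is to invoke the averagedness result of \cite{combettes2018deep} with precisely the right hypotheses: one must check that all activations are $1/2$--averaged (which is why the $2^K$ appears) and that the bias terms $b_k$, together with the bias component extracted from each $R_k$, do not affect averagedness — being mere translations, they contribute nothing to the relevant contraction estimates. A secondary, more routine point is justifying that $\mathcal{P}$ can be chosen orthonormal, so that the spectral norms of the compositions of the $W_k$ collapse to maximal eigenvalue moduli; this holds because $H^\top H$ and $D^\top D$ are symmetric and commute.
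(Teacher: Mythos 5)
Your proposal is correct and follows essentially the same route as the paper: the first claim is obtained by observing that the commuting symmetric factors $W_k$ make every composition $W_k\circ\cdots\circ W_\ell$ symmetric with eigenvalues $\beta_k^{(p)}\cdots\beta_\ell^{(p)}$, so that Proposition~\ref{prop:theta} yields \eqref{eq:thm1_eq1}, and the second claim combines Proposition~\ref{prop:condition31} (to translate (i)--(iii) into the averagedness condition of \cite{combettes2018deep}) with the firm nonexpansiveness of the proximal activations $R_k(\cdot+b_k)$ and the composition theorem of \cite{combettes2018deep}. Your version merely spells out details the paper leaves implicit (orthonormality of $\mathcal{P}$, commutation, and the spectral-norm identification), so no substantive difference.
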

\begin{proof}
If $H^\top H$ and $D^\top D$ are diagonalizable in the same basis then $W\in\mathcal{S}_n$, which, combined with Proposition~\ref{prop:theta}, leads to \eqref{eq:thm1_eq1}. If one of the conditions~(i)--(iii) is satisfied, then we deduce from Proposition~\ref{prop:condition31} that $W$ satisfies \cite[Proposition~3.6(iii)]{combettes2018deep}. and \cite[Condition~3.1]{combettes2018deep}. In addition, for every $k\in\{0,\ldots,K-1\}$, $R_k(\cdot+b_k)$ is firmly nonexpansive \cite[Proposition~12.28]{bauschke2017convex}. Finally, \cite[Theorem~3.8]{combettes2018deep} completes the proof.  
\end{proof}

\noindent The conditions provided by Theorem~\ref{thm:averaged} can be easily checked using \eqref{eq:thm1_eq1}. Theorem~\ref{thm:averaged} provides a framework under which iRestNet is robust to a perturbation of its input: the upper bound of the output perturbation can then be derived from Proposition~\ref{prop:averaged_bound}. 

\section{Numerical experiments}
\label{sec:exp}

In this section, we present numerical experiments on a set of problems of image restoration, demonstrating that in many cases the proposed approach yields a better reconstruction quality than standard variational and machine learning methods.

\subsection{Problem formulation}
We consider the non-blind color image deblurring problem, whose degradation model reads
\begin{equation}
y=H\overline{x}+\omega,
\end{equation}
where $n$ is the number of pixels, $y=(y^{(j)})_{1\leq j\leq 3}\in\mathbb{R}^{3n}$ is the blurred RGB image, $\overline{x}=(\overline{x}^{(j)})_{1\leq j\leq 3}\in\mathbb{R}^{3n}$ is the ground-truth, $H \in {\mathbb{R}}^{3n\times 3n}$ is a linear operator that models the circular convolution of a known blur kernel with each channel of the color image, and $\omega\in\mathbb{R}^{3n}$ is a realization of an additive white Gaussian noise with standard deviation $\sigma$. An estimate of $\overline{x}$ can be derived from the following penalized formulation, which includes a smoothed total variation regularization,
\begin{equation}
\minimize_{x\in\mathcal{C}} ~~\frac{1}{2}\|Hx-y\|^2+\lambda\sum_{i=1}^{3n} \sqrt{\frac{\left(D_{\text{v}}x\right)_i^2+\left(D_{\text{h}}x\right)_i^2}{\delta^2}+1},
\label{pb:deblurring}
\end{equation}
\noindent where the feasible set ${\mathcal{C}}$ is the hypercube $[x_{\rm min},x_{\rm max}]^{3n}$, $x_{\rm min}$ and $x_{\rm max}$ are a lower and an upper bound on the pixel intensity, respectively, $D_{\text{v}}\in\mathbb{R}^{3n\times 3n}$ and $D_{\text{h}}\in\mathbb{R}^{3n\times 3n}$ are the vertical and horizontal gradient operators, respectively, $\delta>0$ is a smoothing parameter and $\lambda>0$ is the regularization parameter. Here, $x_{\rm min}=0$, $x_{\rm max}=1$ and we set $\delta=0.01$ in all experiments, which appears as an appropriate order of magnitude. To find this value for $\delta$, we solved Problem~{\eqref{pb:deblurring}} for a small set of images of the database and used the simplex method to find the best values for $\delta$ and $\lambda$ in terms of image quality. It is worth noting that the value for $\delta$ has not been fine-tuned, but that the proposed architecture could also be easily modified to include the inference of $\delta$. The update $\mathcal{A}$, defined in \eqref{eq:mathcalA}, is derived from \eqref{pb:deblurring}, and is unfolded over $K$ iterations, as it is described in Section~\ref{sec:net}.
The bound constraints in problem~\eqref{pb:deblurring} fall under the framework studied in Section~\ref{sec:hyperslab}, which provides us with the expression for the proximity operator of the barrier and its gradient.

\subsection{Network characteristics}
The tuning of the number of unfolded iterations $K$ must achieve a compromise between training time, memory requirement, and performance.  In order to determine a suitable setting for $K$, we trained networks with different numbers of layers and increased the number of layers until the performance of the network did not improve significantly. Using this procedure, the depth of iRestNet is taken equal to $K=40$.
Regarding the hidden structures $(\mathcal{L}_k^{(\lambda)})_{0\leq k\leq K-1}$, which estimate the regularization parameter, they are chosen in view of the regularization function used in problem~\eqref{pb:deblurring} and have the following expression,
\begin{equation}
(\forall k\in\{0,\ldots, K-1\})~~ \lambda_k=\mathcal{L}_k^{(\lambda)}\left(x_k\right)=\frac{\mathrm{Softplus}\left(b_k\right)\widehat{\sigma}(y)}{\eta(x_k)+\mathrm{Softplus}\left(c_k\right)},
\label{eq:reg_TV}
\end{equation}
where $(b_k,c_k)$ is a pair of scalars learned by the network, $\eta(x_k)$ is the standard deviation of the concatenated spatial gradients of $x_k$, $[(D_{\rm v} x_k)^\top (D_{\rm h} x_k)^\top]$, and $\widehat{\sigma}(y)$ is an approximation of the noise level in the blurred image. The noise level is estimated as in \cite[Section~11.3.1]{mallat1999wavelet}

\begin{equation}
\widehat{\sigma}(y)=\text{median}(|W_{\rm H}y|)/0.6745,
\end{equation}
where $|W_{\rm H}y|$ is the vector gathering the absolute value of the diagonal coefficients of the first level Haar wavelet decomposition of $y$. It is worth noticing that the proposed architecture does not require any prior knowledge about the noise level, in particular the noise standard deviation does not have to be the same for all input images.  
\\The architecture of the post-processing layer $\mathcal{L}_{\rm pp}$ is inspired from \cite{zhang2017learning}: it is made of 9 convolutional layers with filters of size $3\times 3$. The dilation factor changes from one layer to another, so as to widen the receptive field without creating memory issues. There is little correlation between the artifacts that remain in the image after going through the 40 blocks of iRestNet and the ground-truth image. Hence, it is easier for the network to learn the residual mapping instead of the image itself since pushing the residual to zero is easier than fitting an identity mapping by a stack of layers~\cite{zhang2017beyond,zhang2017learning,he2016deep}. Therefore, we add a skip connection between the input of $\mathcal{L}_{\rm pp}$ and its output. Finally, a ReLU activation function is used after each convolution, the final activation function is chosen as the Sigmoid function, and residual learning is combined with batch normalization, a technique which is widely used in deep learning to accelerate and stabilize the training process \cite{zhang2017learning}. The final architecture of $\mathcal{L}_{\rm pp}$ can be found in Figure~\ref{fig:cnnpp}.

\begin{figure}
\centering
\hspace{-0.8cm}
\includegraphics[width=1.02\textwidth]{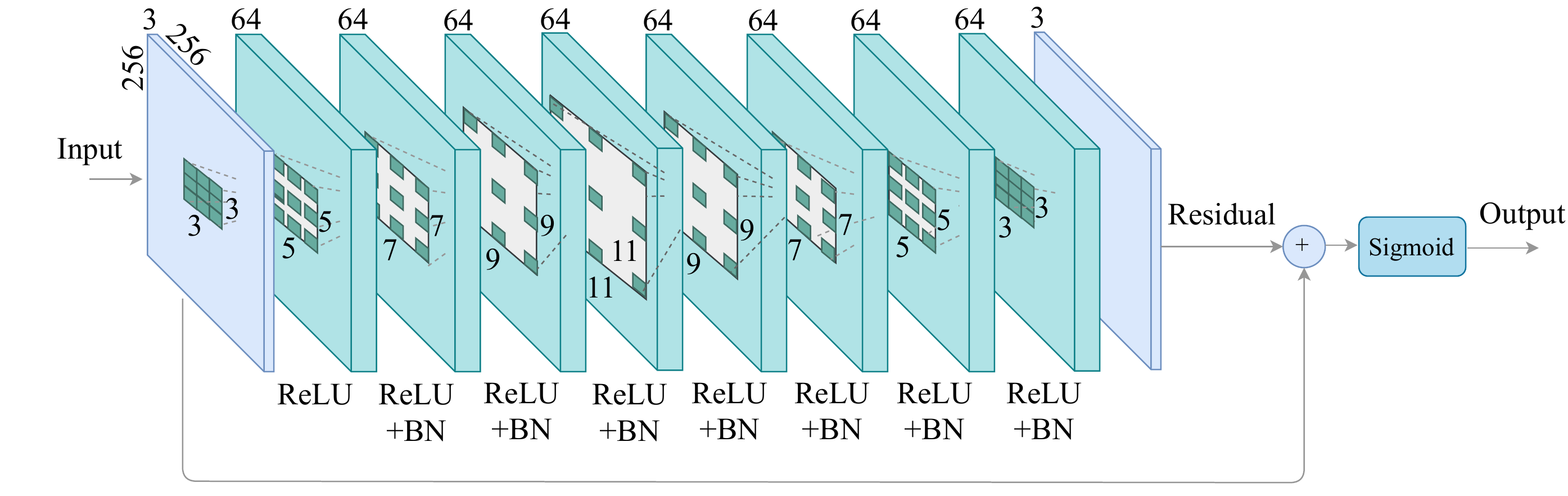}
\caption{Architecture of $\mathcal{L}_{\rm pp}$. BN: batch normalization.}
\label{fig:cnnpp}
\end{figure}

\subsection{Dataset and experimental settings}

The training set is made of 1200 RGB images: 200 images stem from the Berkeley segmentation (BSD500) training set, while the remaining 1000 images are taken from the COCO training set. We use the BSD500 validation set, which is made of 100 images, to monitor the training and check if there is overfitting. The performance of the proposed method is evaluated on two different test sets: the BSD500 test set, which is made of 200 RGB images, and the Flickr30 test set used in \cite{xu2014deep}, which is made of 30 RGB images.
The test images have been center-cropped using a window of size $256\times 256$. Blurry images are produced using the following $25\times 25$ blur kernels and noise levels:

\begin{enumerate}
\item[-] A Gaussian kernel, which models atmospheric turbulence, with a standard deviation of 1.6 pixels, and a Gaussian noise standard deviation of $\sigma=0.008$. This configuration is denoted as GaussianA. To evaluate the robustness of the proposed method with respect to the noise level, the same kernel is used with a Gaussian noise whose standard deviation is uniformly distributed between 0.01 and 0.05. The latter is denoted as GaussianB.
\item[-] The Gaussian kernel with a standard deviation of 3 pixels, and a Gaussian noise standard deviation of $\sigma=0.04$, denoted as GaussianC.
\item[-] The eighth and third motion test kernels from \cite{levin2009understanding}, which are real-world camera shake kernels, with a Gaussian noise standard deviation of $\sigma=0.01$. These settings are denoted as MotionA and MotionB, respectively. 
\item[-]The square uniform kernel of size $7\times 7$, with a Gaussian noise standard deviation of $\sigma=0.01$. This configuration is referred to as Square. 
\end{enumerate}

\subsection{Training}
\label{subsec:training}
For each degradation model, one iRestNet network is trained. We use a greedy approach for training the first 30 layers. For $\mathcal{L}_0$, a minibatch of 10 images is selected at every iteration, randomly cropped using a window of size $256\times 256$, blurred with the given kernel, and degraded with Gaussian noise; the training of $\mathcal{L}_0$ stops after a fixed number of epochs. Then, for each image of the training set, a random crop of size $256\times 256$ is selected, blurred, corrupted with noise and passed through $\mathcal{L}_0$, the output is saved and used as an input to train $\mathcal{L}_1$. When the training of $\mathcal{L}_1$ is complete, its output is used to train the next layer, etc... This training strategy is chosen with regards to its low memory requirement: the number of layers is not limited by the hardware.
The rest of the network, $\mathcal{L}_{\rm pp}\circ\mathcal{L}_{39}\circ\ldots\circ\mathcal{L}_{30}$, is trained as one block and the learning rate is multiplied by $0.9$ every $50$ epochs. To accelerate the training, for every $k\in\{1,\ldots,K-1\}$, the weights of $\mathcal{L}_k$ are initialized with those of $\mathcal{L}_{k-1}$. Detailed information about learning rates and number of epochs can be found in Table~\ref{tab:lr_epochs} below.
\begin{table}[h!]
\centering
\setlength\tabcolsep{2pt}
\begin{tabular}{rcccccc}
\toprule
 & GaussianA & GaussianB & GaussianC & MotionA & MotionB & Square\\
 \midrule
Rates & ($0.01$,$0.001$) & ($0.01$,$0.001$) & ($0.001$,$0.001$) & ($0.01$,$0.002$)& ($0.01$,$0.001$) & ($0.01$,$0.005$)\\
 Epochs & (40,393) & (40,340) & (40,300) & (40,1200) & (40,1250) & (40,740)\\
 \bottomrule
\end{tabular}
\caption{Training information. First row: initial learning rates, second row: number of epochs. For every couple, the first and second numbers correspond to the training of $(\mathcal{L}_k)_{0\leq k\leq 29}$ and $\mathcal{L}_{\rm pp}\circ\mathcal{L}_{39}\circ\ldots\circ\mathcal{L}_{30}$, respectively.}
\label{tab:lr_epochs}
\end{table}

\noindent The validation set is used to monitor this last step of the training. In particular, the configuration of network parameters that gives the best performance on the validation set during the training is the one saved and used for the tests.
Note that for the first 30 layers, after each layer the quality of the restored training images should improve. This property comes from the training strategy, it is not encoded in the network: if memory was not an issue, then iRestNet could be trained in an end-to-end fashion.
\\
We use the Adam optimizer \cite{kingma2014adam} to minimize the training loss, which is taken as the negative of the structural similarity measure (SSIM) \cite{wang2004image} defined below
\begin{equation}
\mathrm{SSIM}(x,\overline{x})=\frac{(2\mu_x\mu_{\overline{x}}+c_1)(2\sigma_x\sigma_{\overline{x}}+c_2)(2\mathrm{cov}_{x\overline{x}}+c_3)}{(\mu_x^2+\mu_{\overline{x}}^2+c_1)(\sigma_x^2+\sigma_{\overline{x}}^2+c_2)(\sigma_x\sigma_{\overline{x}}+c_3)},
\end{equation}
where $\overline{x}$ is the ground truth, $x$ is the restored image, $(\mu_x,\sigma_x)$ and $(\mu_{\overline{x}},\sigma_{\overline{x}})$ are mean and standard deviation of $x$ and $\overline{x}$, respectively, $\mathrm{cov}_{x\overline{x}}$ is the cross--covariance of $x$ and $\overline{x}$, and $c_1$, $c_2$ and $c_3$ are constants.
As explained in \cite{wang2004image}, the SSIM is a good measure of perceived visual quality, since it is based on how the human eye extracts structural information from an image. Hence, it is more discriminative with regards to artifacts than the mean square error for instance. The gradient of the SSIM loss with respect to the trainable parameters of the network is computed using the code {available online \footnote{\url{https://github.com/Po-Hsun-Su/pytorch-ssim}} and based on} \cite{wang2004image}, the chain rule, automatic differentiation~\cite{paszke2017autodiff}, and the expression given in Section~\ref{sec:hyperslab} for the derivatives of the barrier proximity operator. 
\\
Codes are implemented in Pytorch {and are available online \footnote{\url{https://github.com/mccorbineau/iRestNet}}}. Some hidden layers {in the post-processing part make use of ReLU, which is not differentiable everywhere}. Since this nondifferentiability happens only at specific points for which the left and right derivatives are well--defined, Pytorch can handle it as explained in \cite{Goodfellow-et-al-2016}.
All trainings are conducted using a GeForce GTX 1080 GPU or a Tesla V100 GPU. The training, which can be performed off-line, takes approximately 3 to 4 days for each blur kernel, while the time taken per test image is only about 1.4 sec on a GeForce GTX 1080 GPU.

\subsection{Evaluation metrics and competitors}
The restoration is evaluated in terms of the SSIM metric.
The reconstruction given by the proposed approach is compared with a solution to problem~(\ref{pb:deblurring}) obtained using the projected gradient algorithm \cite{iusem03pgd}.
For every blurred image, the pair $(\lambda,\delta)$ which leads to the best SSIM is selected using the simplex method. The solution given by this variational approach is referred to as VAR. The latter is an unrealistic scenario since it assumes that there is a perfect estimator of the error, but it gives an upper bound on the image quality that one can expect by solving (\ref{pb:deblurring}). We also use the following deep learning image restoration methods for comparison: EPLL \cite{zoran2011learning} {and MLP \cite{schuler2013machine}. 
Finally, we include comparisons with three unfolded-based methods, namely IRCNN~\cite{zhang2017learning}, where an empirical algorithm\footnote{{In \cite{zhang2017learning}, this algorithm is improperly called half-quadratic splitting, but it does not correspond to usual half-quadratic optimization methods described for instance in \cite{allain2006global}. Actually, the algorithm unfolded in \cite{zhang2017learning} can be interpreted as a preconditioned forward-backward algorithm.}} derived from an augmented Lagrangian formulation is unfolded over 30 iterations and a CNN is used as a denoiser to update the splitting variable, FCNN \cite{zhang2017learningiterative}, where the authors unfold the same algorithm as in the previous method} and use a network to learn an effective regularization function, and the method from \cite{meinhardt2017learning}, which is referred to as PDHG, where the authors perform a maximum of 30 iterations of a primal dual hybrid gradient algorithm and the proximity operator of the second regularization function is replaced by a neural network. 
\\
For FCNN, we use the code that is available online, in which the authors provide a model that has only been trained for motion blurs. Hence, for a fair comparison, we only provide the results of FCNN on MotionA and MotionB, and we specify that this method is not applicable (n/a) to the other configurations. Similarly, for MLP and PDHG, the authors do not provide models that were trained specifically for MotionB and Square, so we do not test these methods on these two configurations.
\\
Since MLP, EPLL and IRCNN require the knowledge of the noise level, for the GaussianB degradation model, we make use of the estimation of the noise standard deviation given by the method in \cite{ramadhan2017image}. In addition, since some comparison methods, like EPLL for instance, do not estimate well the borders of the images, the SSIM index is computed excluding a 6-pixel-wide frame for all images and all tested methods.      

\subsection{Results and discussion}

The average SSIM obtained with the different methods for the various blur kernels and noise levels on the BSD500 test set can be found in Table~\ref{tab:res_BSD500}. The mean SSIM achieved with iRestNet on this test set is greater than those obtained with the other methods for all degradation models except MotionA. For this kernel, the average SSIM achieved with iRestNet is the second highest value after IRCNN, which appears as the most competitive method. IRCNN involves two steps: first, a Wiener filter is applied to the blurred image, then, a neural network is used to predict the residual and denoise the image. These two steps are repeated 30 times, for 30 different manually tuned regularization parameters. In contrast, iRestNet does not require any tuning from the user regarding the regularization parameters during training. For completeness, the SSIM of all images of the BSD500 test set are plotted in Figure~\ref{fig:SSIM_sorted} for the 6 different degradation models. As one can see, iRestNet performs well in terms of SSIM on most of the images.
\\ 
\begin{table}[h!]
\setlength\tabcolsep{4pt}
\centering
\begin{tabular}{lcccccc}
\toprule
                              & GaussianA    & GaussianB    & GaussianC    &  MotionA      & MotionB      & Square      \\
\midrule
Blurred                       & 0.676        & 0.526        & 0.326        & 0.383        & 0.549        & 0.544        \\
VAR                           & 0.804        & 0.723        & 0.587        & 0.819        & 0.829        & 0.756        \\ 
EPLL \cite{zoran2011learning} & 0.800        & 0.708        & 0.565        & 0.816        & 0.839        & 0.755        \\
MLP \cite{schuler2013machine} & 0.821        & 0.734        & 0.608        & 0.854        & n/a        & n/a        \\
PDHG \cite{meinhardt2017learning}
                              & 0.796        & 0.716        & 0.563        & 0.801        & n/a        & n/a        \\ 
IRCNN \cite{zhang2017learning}& 0.841        & 0.768        & 0.619        &\textbf{0.902}& 0.907        & 0.834        \\
FCNN \cite{zhang2017learningiterative}
                              & n/a        & n/a        & n/a        & 0.794        & 0.847        & n/a        \\
iRestNet                      &\textbf{0.853}&\textbf{0.787}&\textbf{0.641}& 0.898        &\textbf{0.910}&\textbf{0.840}\\
\bottomrule
\end{tabular}
\caption{SSIM results on the BSD500 test set.}
\label{tab:res_BSD500}
\end{table}

\begin{figure}
\setlength\tabcolsep{4pt}
\centering
\begin{tabular}{cc}
 \includegraphics[width=0.47\textwidth]{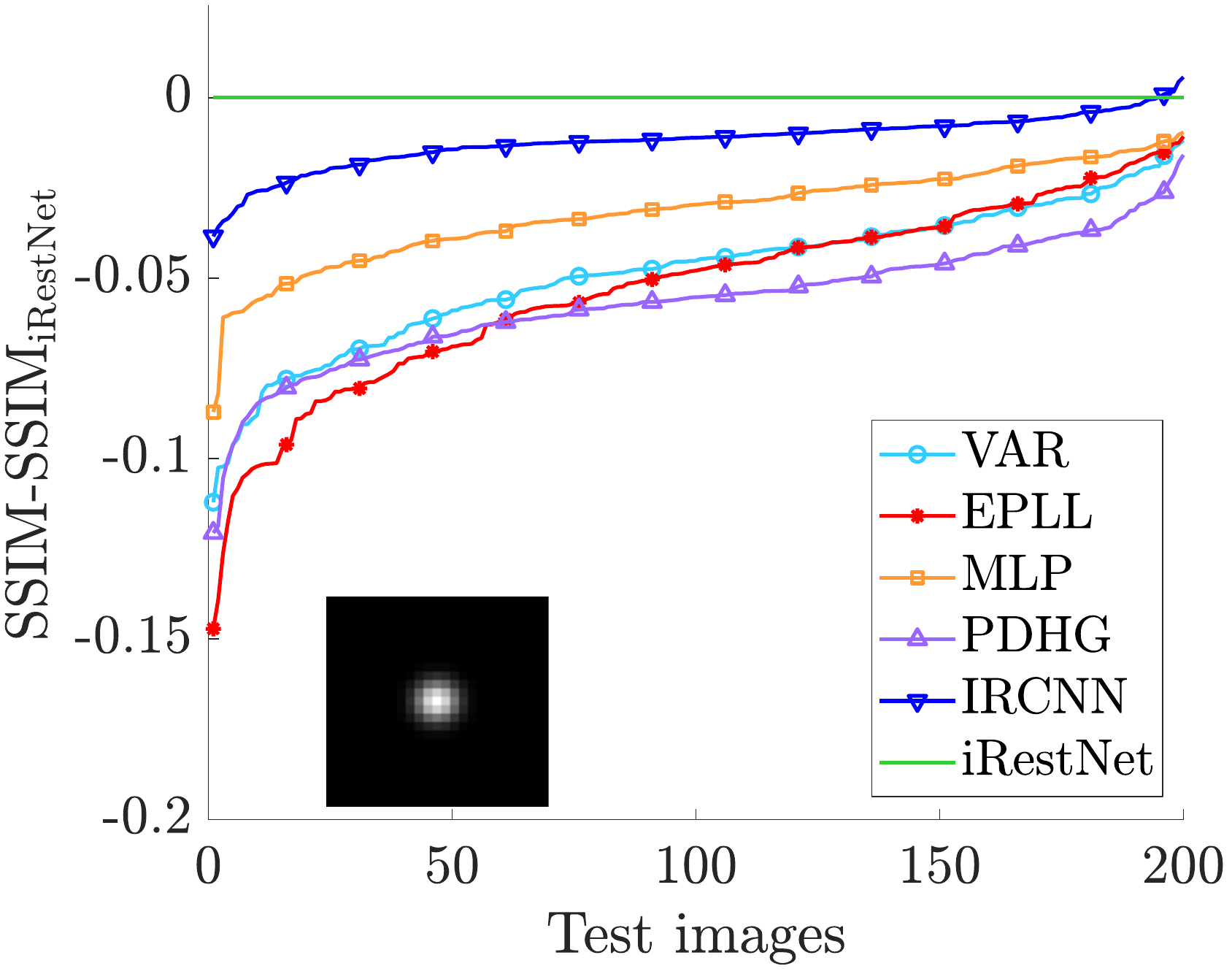} & \includegraphics[width=0.47\textwidth]{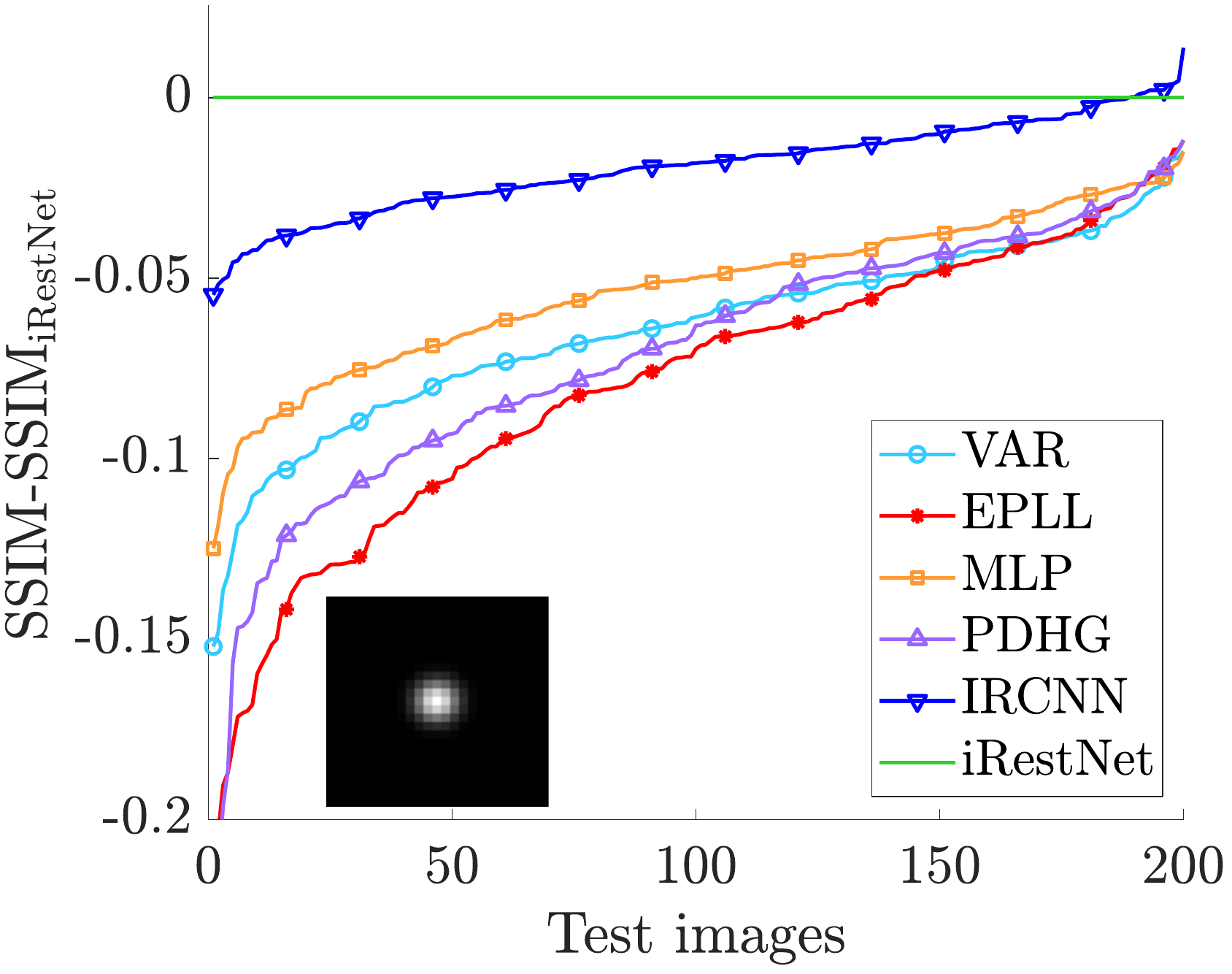}\\
(a) & (b)\\
 \includegraphics[width=0.47\textwidth]{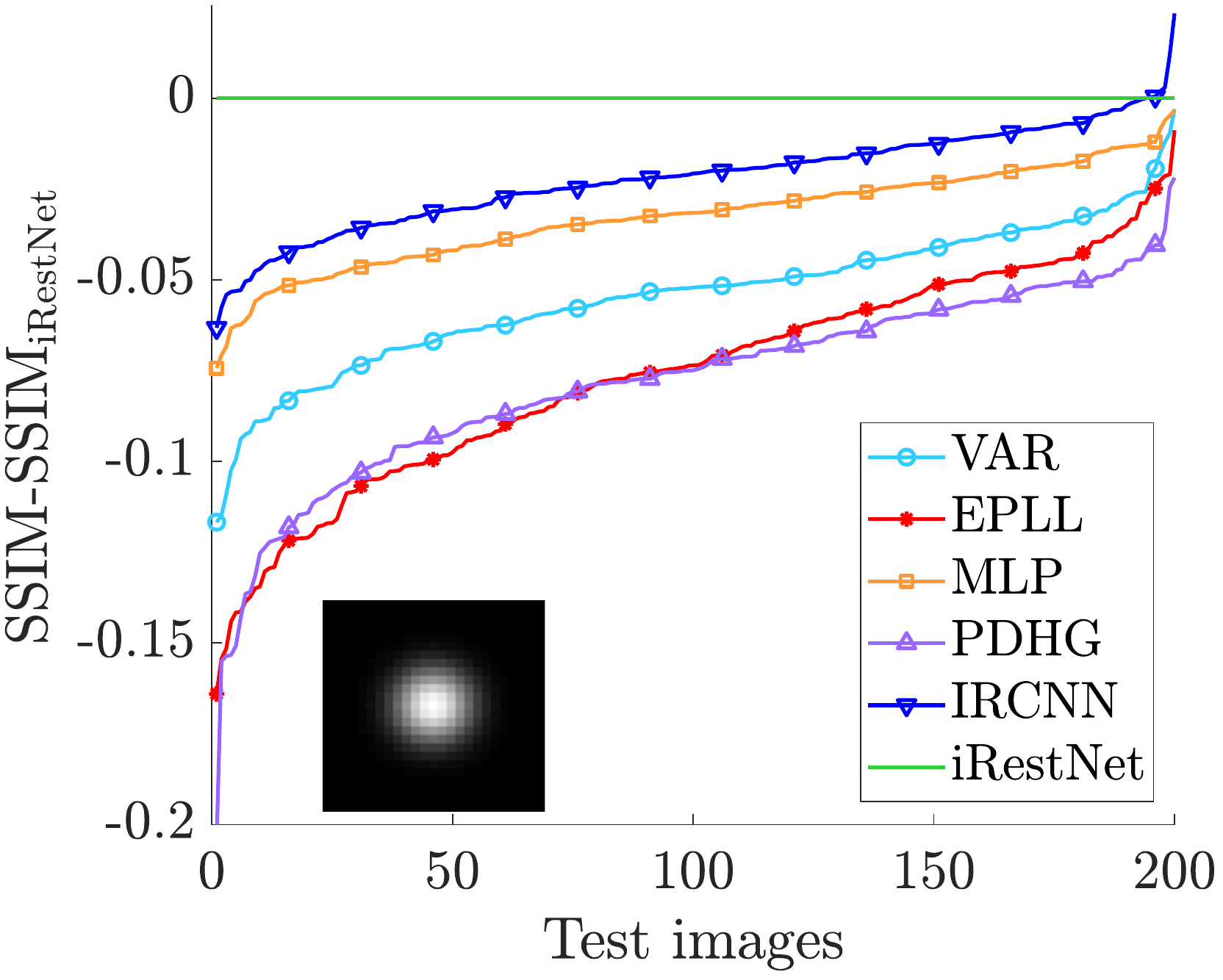}   & \includegraphics[width=0.47\textwidth]{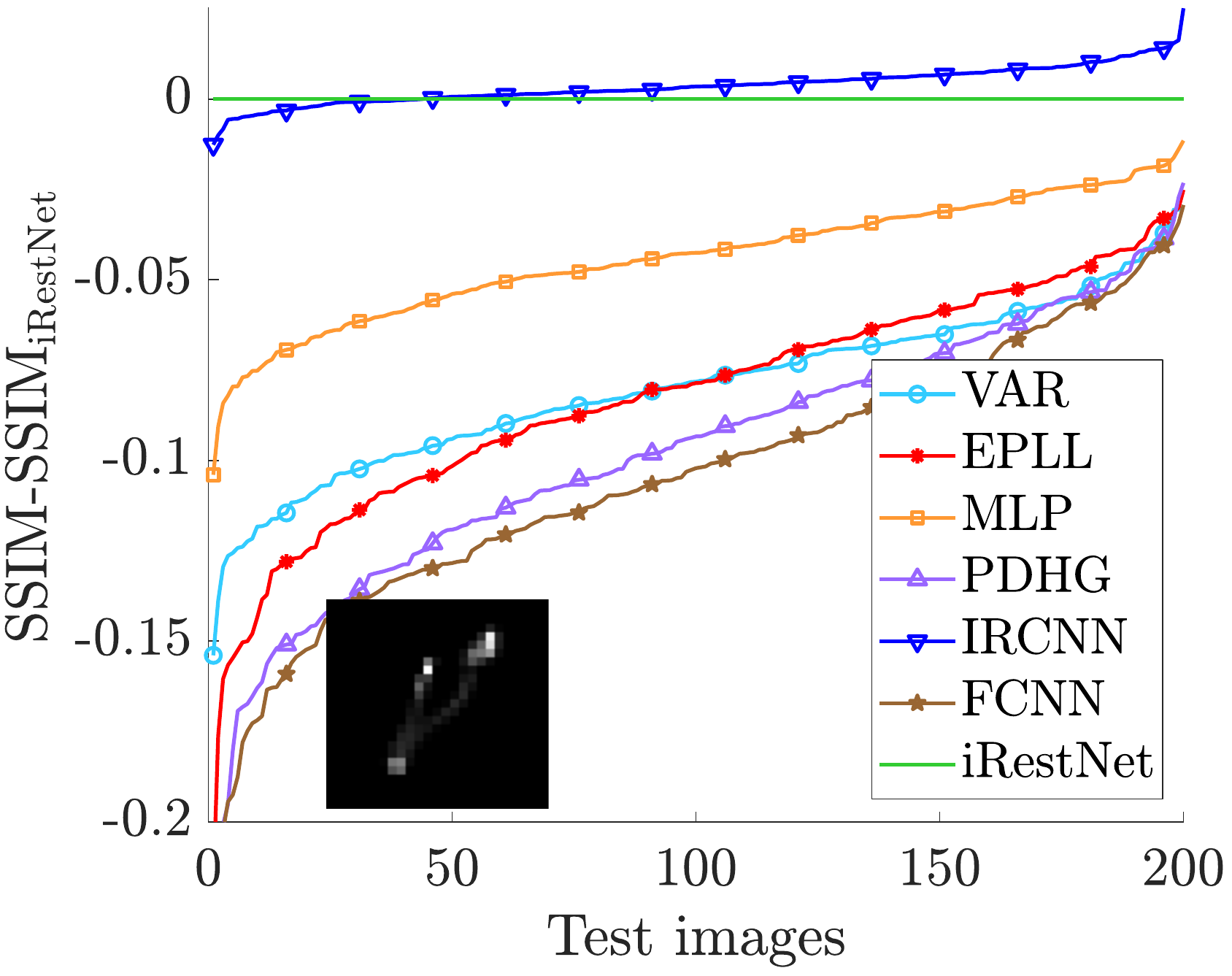}   \\
(c) & (d)\\
 \includegraphics[width=0.47\textwidth]{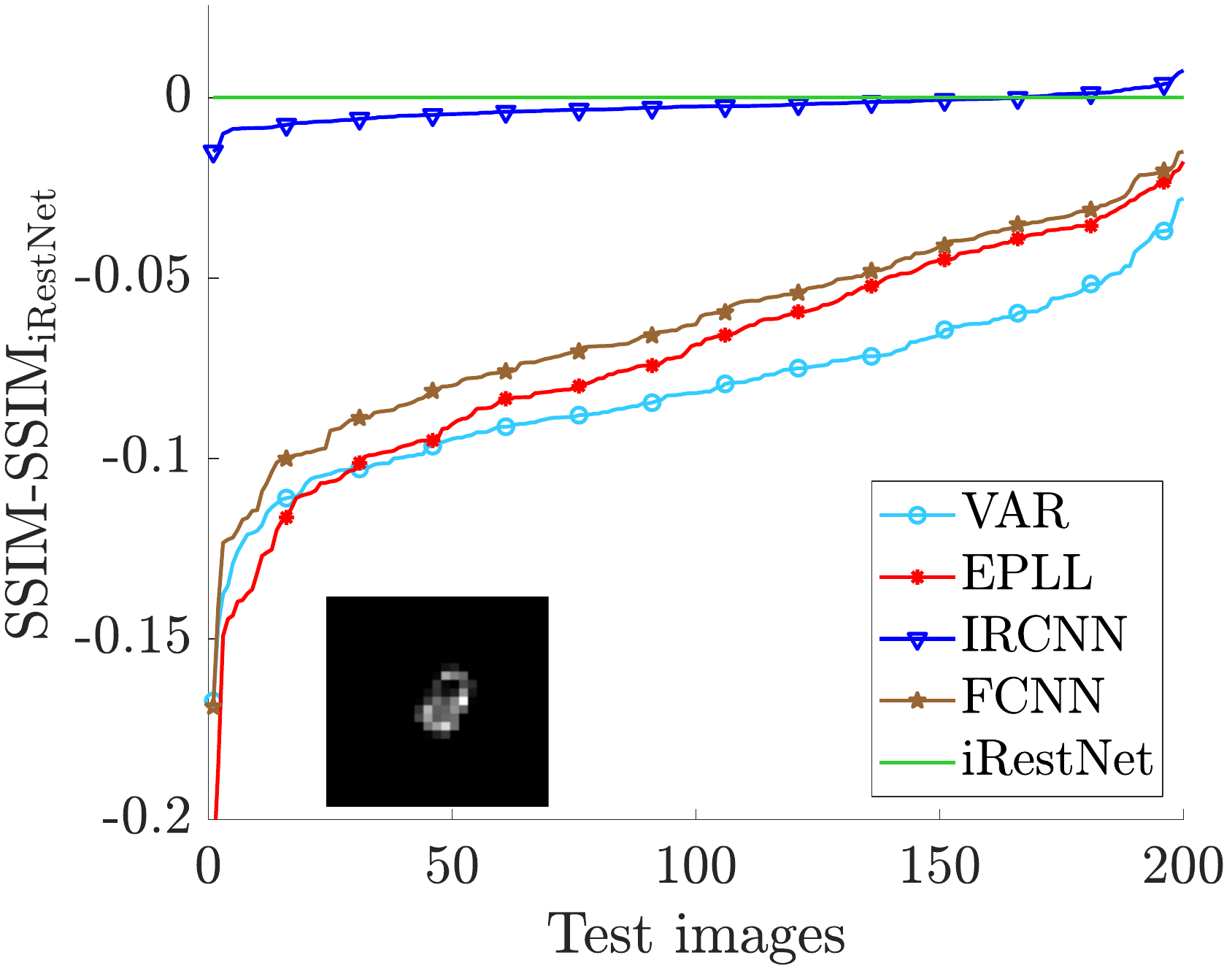}   &  \includegraphics[width=0.47\textwidth]{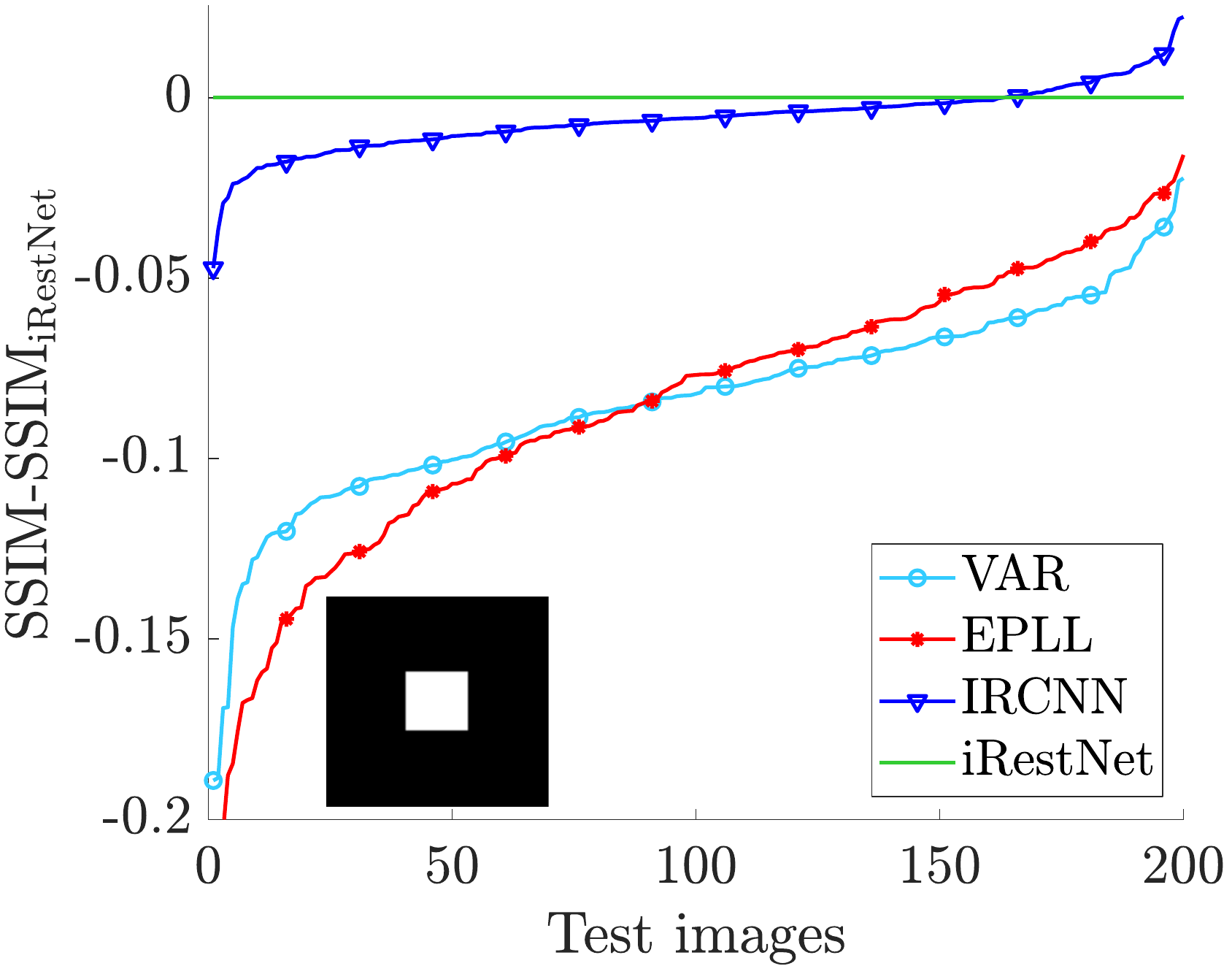}  \\
(e) & (f)\\
\end{tabular}
\caption{Sorted improvement of iRestNet with regards to other methods on the BSD500 test set using the SSIM metric: a negative value indicates a better performance of iRestNet. (a): GaussianA, (b): GaussianB, (c): GaussianC, (d): MotionA, (e): MotionB, (f): Square.}
\label{fig:SSIM_sorted}
\end{figure}

\noindent Since no image was taken from Flickr for training iRestNet, the results on the Flickr30 test set show how well the performance of the trained networks are transferable on test sets with statistics that are different from those of the training set. Table~\ref{tab:res_Flickr30} contains the average SSIM obtained with the different methods on the Flickr30 test set. Similarly to the BSD500 test set, iRestNet compares favorably with the other approaches on the Flickr30 test set.
\begin{table}[h!]
\setlength\tabcolsep{4pt}
\centering
\begin{tabular}{lcccccc}
\toprule
                               & GaussianA    & GaussianB     & GaussianC         & MotionA      & MotionB      & Square       \\
\midrule 
Blurred                        & 0.723        & 0.545         & 0.355             & 0.376        & 0.590        & 0.579        \\
VAR                            & 0.857        & 0.776         & 0.639             & 0.856        & 0.869        & 0.818        \\ 
EPLL \cite{zoran2011learning}  & 0.860        & 0.770         & 0.616             & 0.857        & 0.887        & 0.827        \\
MLP \cite{schuler2013machine}  & 0.874        & 0.798         & 0.668             & 0.891        & n/a        & n/a        \\
PDHG \cite{meinhardt2017learning}
                               & 0.853        & 0.781         & 0.623             & 0.855        & n/a        & n/a        \\
IRCNN \cite{zhang2017learning} & 0.885        & 0.819         & 0.676             &\textbf{0.927}&\textbf{0.930}&\textbf{0.886}\\
FCNN \cite{zhang2017learningiterative}
                               & n/a        & n/a         & n/a             & 0.801        & 0.890        & n/a        \\
iRestNet                       &\textbf{0.892}&\textbf{0.833} &\textbf{0.696}     & 0.919        &\textbf{0.930}&\textbf{0.886}\\
\bottomrule
\end{tabular}
\caption{SSIM results on the Flickr30 test set.}
\label{tab:res_Flickr30}
\end{table}

Examples of visual results obtained with the different methods can be found in Figures~\ref{fig:gaussianB_image} and \ref{fig:square_image} for two images from the BSD500 test set and the blur kernels GaussianB and Square, respectively. We also provide the results obtained for one image from the Flickr30 test set that has been degraded with MotionB. As one can see from inspecting these pictures, details from the snake's and caterpillar's skin patterns are better retrieved with iRestNet, which provides more visually-satisfactory results than competitors. Similarly, on Figure~\ref{fig:motionB_image}, competitors tend to smooth too much the details on the leaves as it can be seen in the top left-hand corner. Regarding Figure~\ref{fig:gaussianB_image}, which belongs to the test set with a level-varying noise, it is worth noting that, on the result obtained with the proposed method, the green background is free from artifacts, which is not the case for the other methods, in particular for PDHG and IRCNN. This suggests that those two competitors are not robust to a small change in the noise level.
\begin{figure}
\setlength\tabcolsep{1pt}
\centering
\begin{tabular}{c}
\includegraphics[width=0.8\textwidth , height=0.2\textwidth]{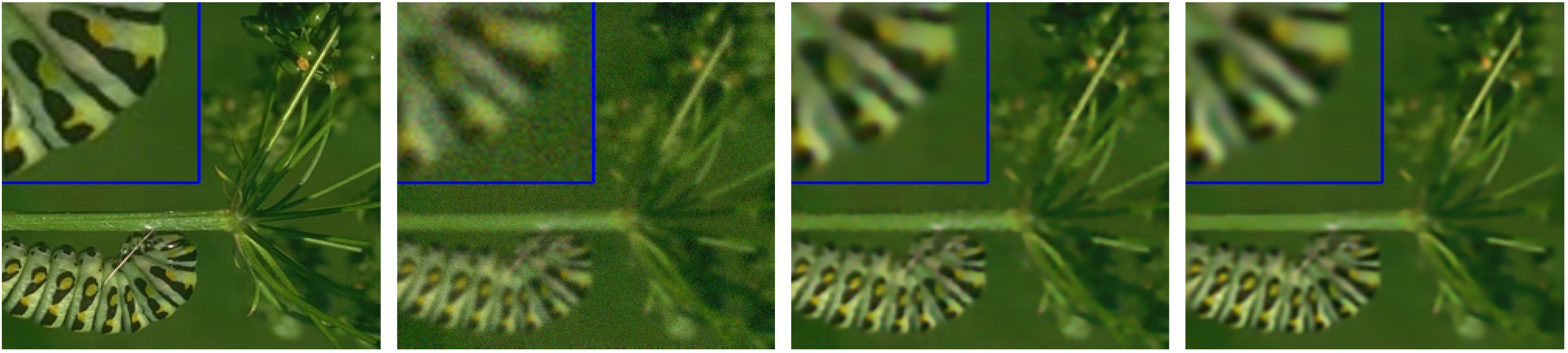}\\
\end{tabular}
\begin{tabular}{cccc}
\hspace{-0.4cm} Ground-truth & \hspace{.3cm} Blurred: 0.509  & \hspace{.4cm} VAR: 0.833 & \hspace{.6cm} EPLL: 0.839 \\
\end{tabular}
\begin{tabular}{c}
\includegraphics[width=0.8\textwidth , height=0.2\textwidth]{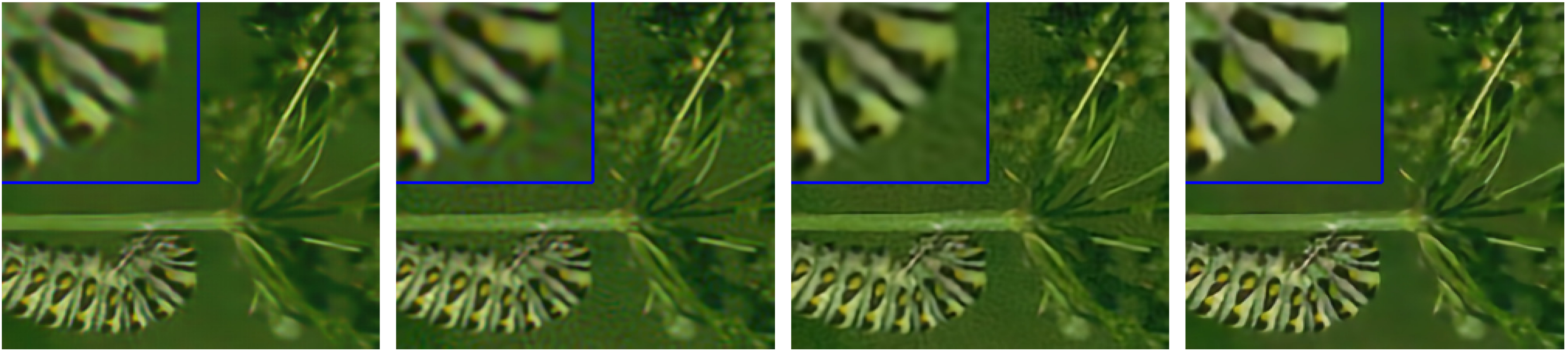}\\
\end{tabular}
\begin{tabular}{cccc}
\hspace{.1cm} MLP: 0.860 &\hspace{.6cm} PDHG: 0.772 & \hspace{.4cm} IRCNN: 0.840  & \hspace{.1cm} iRestNet: \textbf{0.883}\\
\end{tabular}
\caption{Visual results and SSIM obtained with the different methods on one image from the BSD500 test set degraded with GaussianB.}
\label{fig:gaussianB_image}
\end{figure}
\begin{figure}
\setlength\tabcolsep{1pt}
\centering
\begin{tabular}{c}
\includegraphics[width=0.8\textwidth , height=0.2\textwidth]{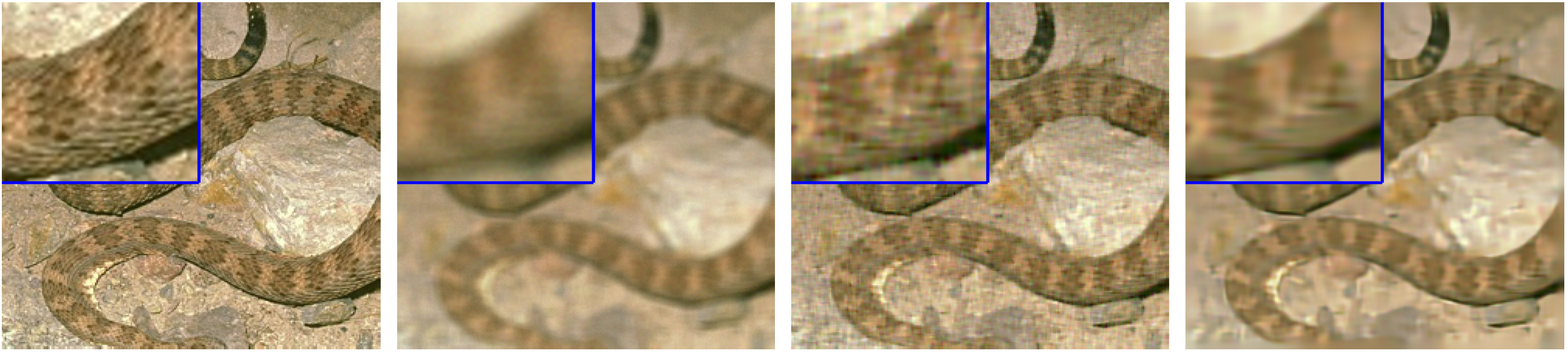}\\
\end{tabular}
\begin{tabular}{cccc}
\hspace{-0.4cm} Ground-truth & \hspace{.3cm} Blurred: 0.344 & \hspace{.4cm} VAR: 0.622 & \hspace{.6cm} EPLL: 0.553 \\
\end{tabular}
\begin{tabular}{c}
\includegraphics[width=0.4\textwidth , height=0.2\textwidth]{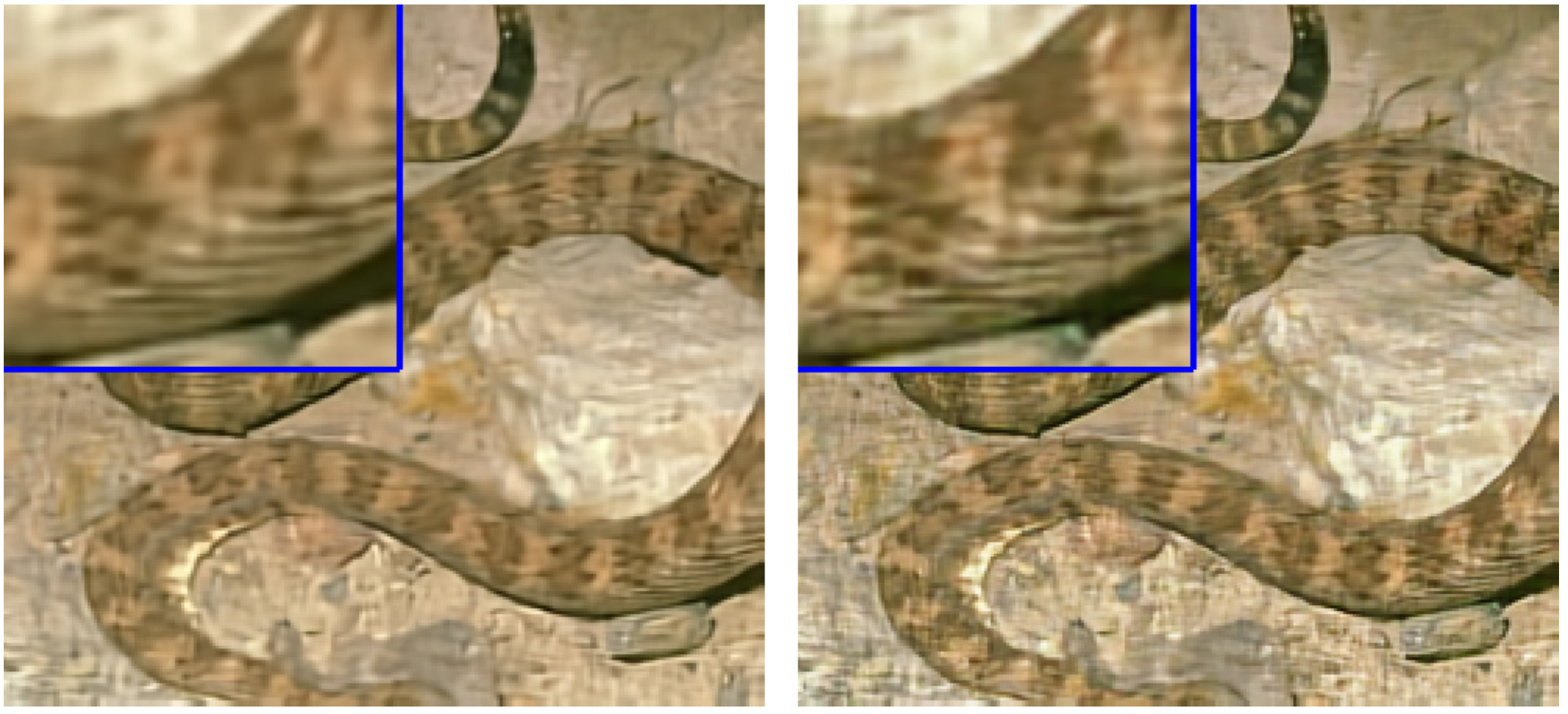}\\
\end{tabular}

\begin{tabular}{cc}
\hspace{.2cm} IRCNN: 0.685 &\hspace{.3cm}iRestNet: \textbf{0.713}\\
\end{tabular}
\caption{Visual results and SSIM obtained with the different methods on one image from the BSD500 test set degraded with Square.}
\label{fig:square_image}
\end{figure}
\begin{figure}
\setlength\tabcolsep{1pt}
\centering
\begin{tabular}{c}
\includegraphics[width=0.8\textwidth , height=0.2\textwidth]{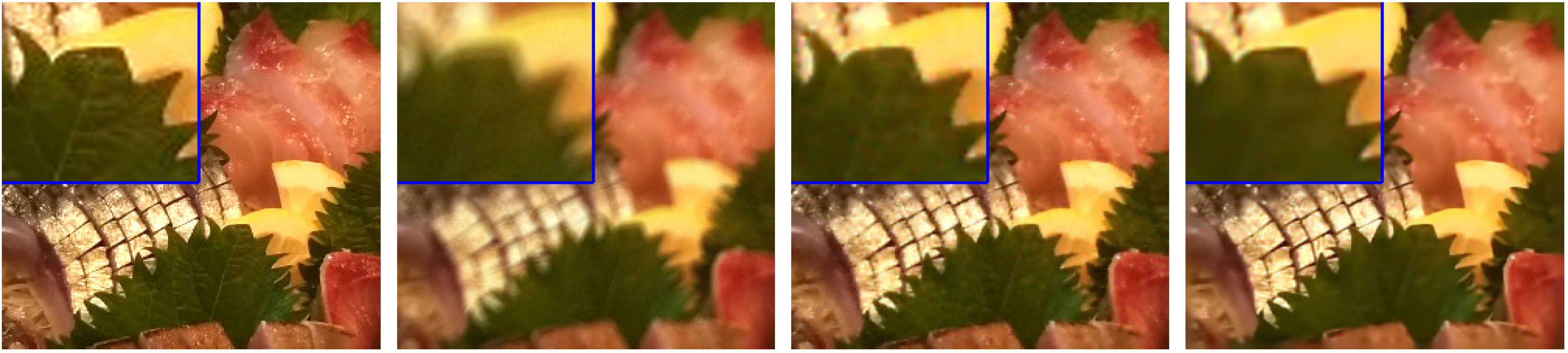}\\
\end{tabular}
\begin{tabular}{cccc}
\hspace{-0.4cm} Ground-truth & \hspace{.3cm} Blurred: 0.576 & \hspace{.4cm} VAR: 0.844 & \hspace{.6cm}  EPLL: 0.849
\\
\end{tabular}
\begin{tabular}{c}
\includegraphics[width=0.6\textwidth , height=0.2\textwidth]{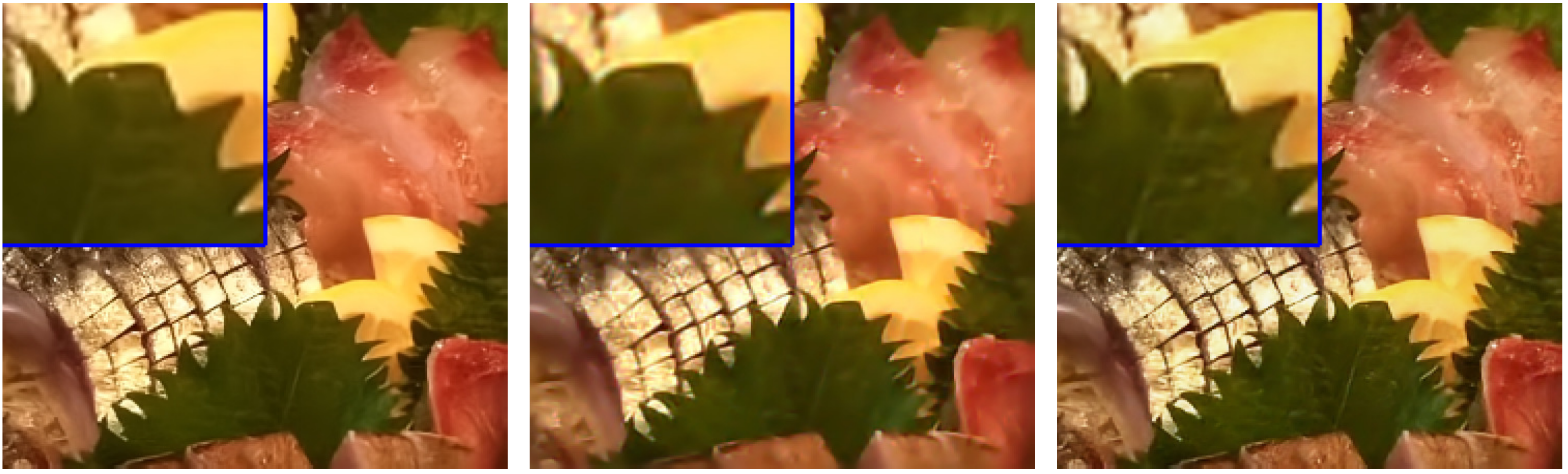}\\
\end{tabular}
\begin{tabular}{ccc}
\hspace{.3cm} IRCNN: 0.906 &\hspace{.4cm} FCNN: 0.856 &\hspace{.2cm} iRestNet: \textbf{0.909}\\
\end{tabular}
\vspace{-.1cm}
\caption{Visual results and SSIM obtained with the different methods on one image from the Flickr30 test set degraded with MotionB.}
\label{fig:motionB_image}
\end{figure}

Figure~\ref{fig:gamma_mu_lambda} shows the stepsize, barrier parameter and regularization weight sequences obtained by passing the image from Figure~\ref{fig:gaussianB_image} through the 40 layers of iRestNet. 
\begin{figure}
\setlength\tabcolsep{1.5pt}
\centering
\begin{tabular}{ccc}
\includegraphics[width=0.32\textwidth , height=0.25\textwidth]{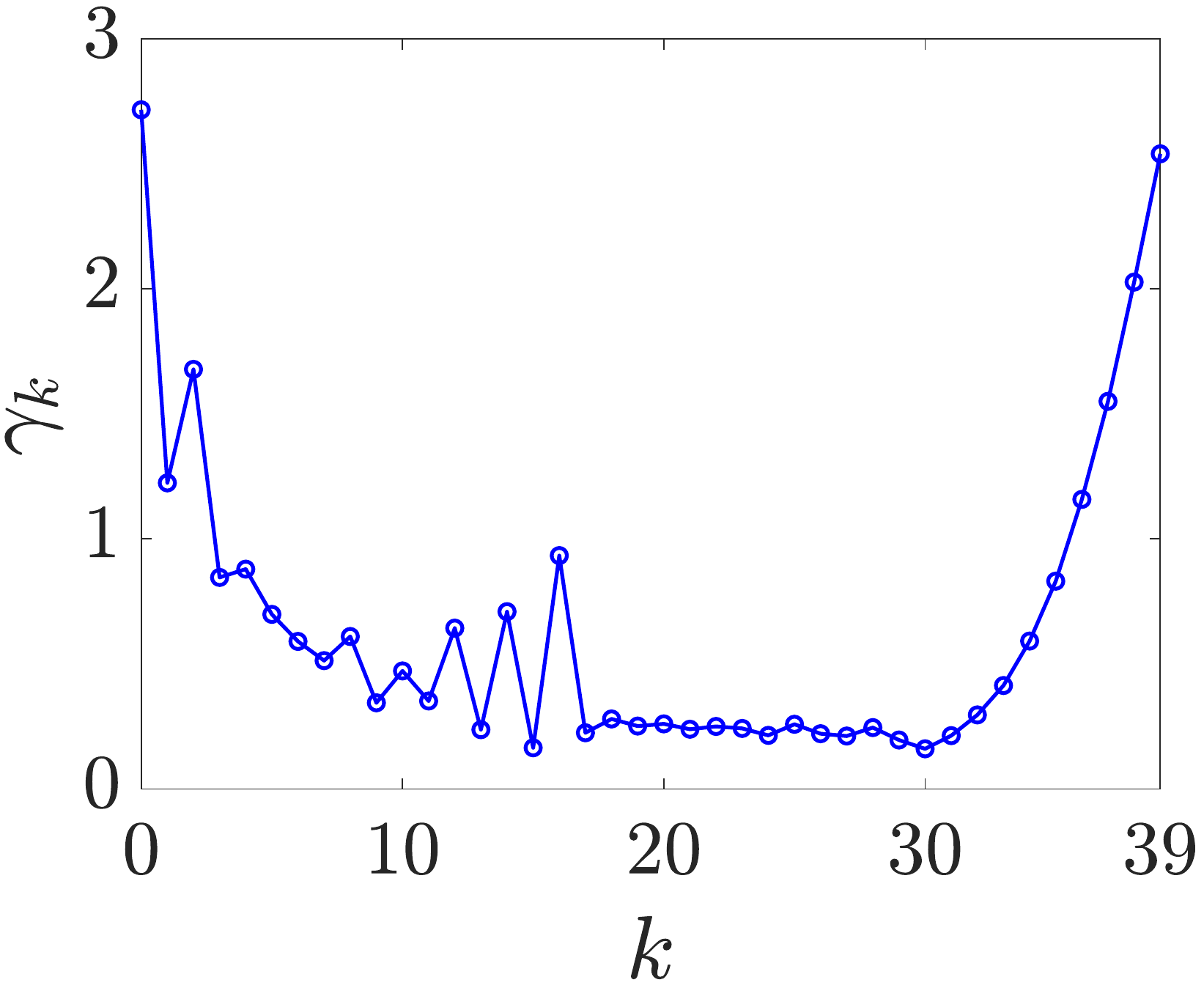}&
\includegraphics[width=0.32\textwidth , height=0.25\textwidth]{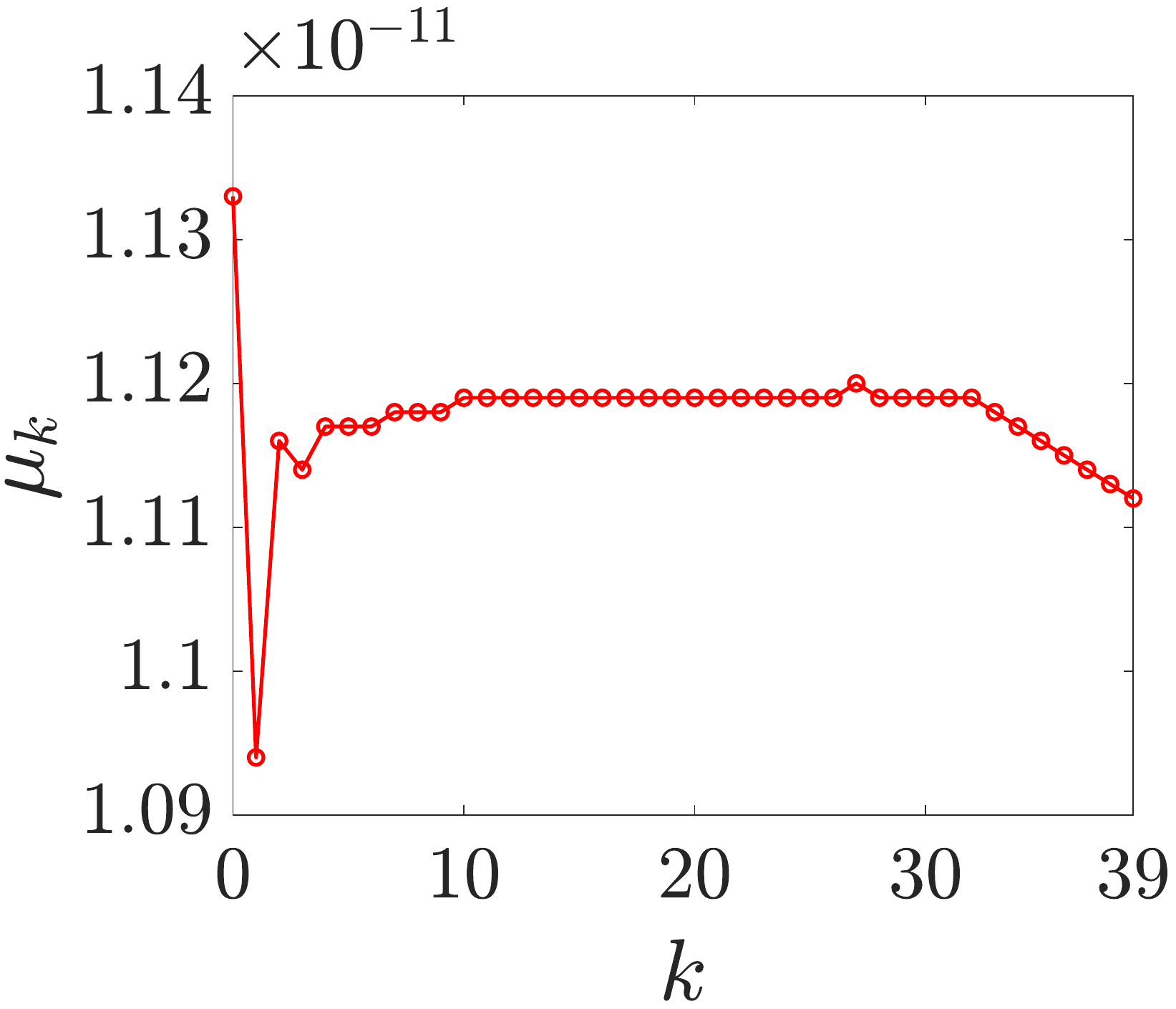}&
\includegraphics[width=0.32\textwidth , height=0.25\textwidth]{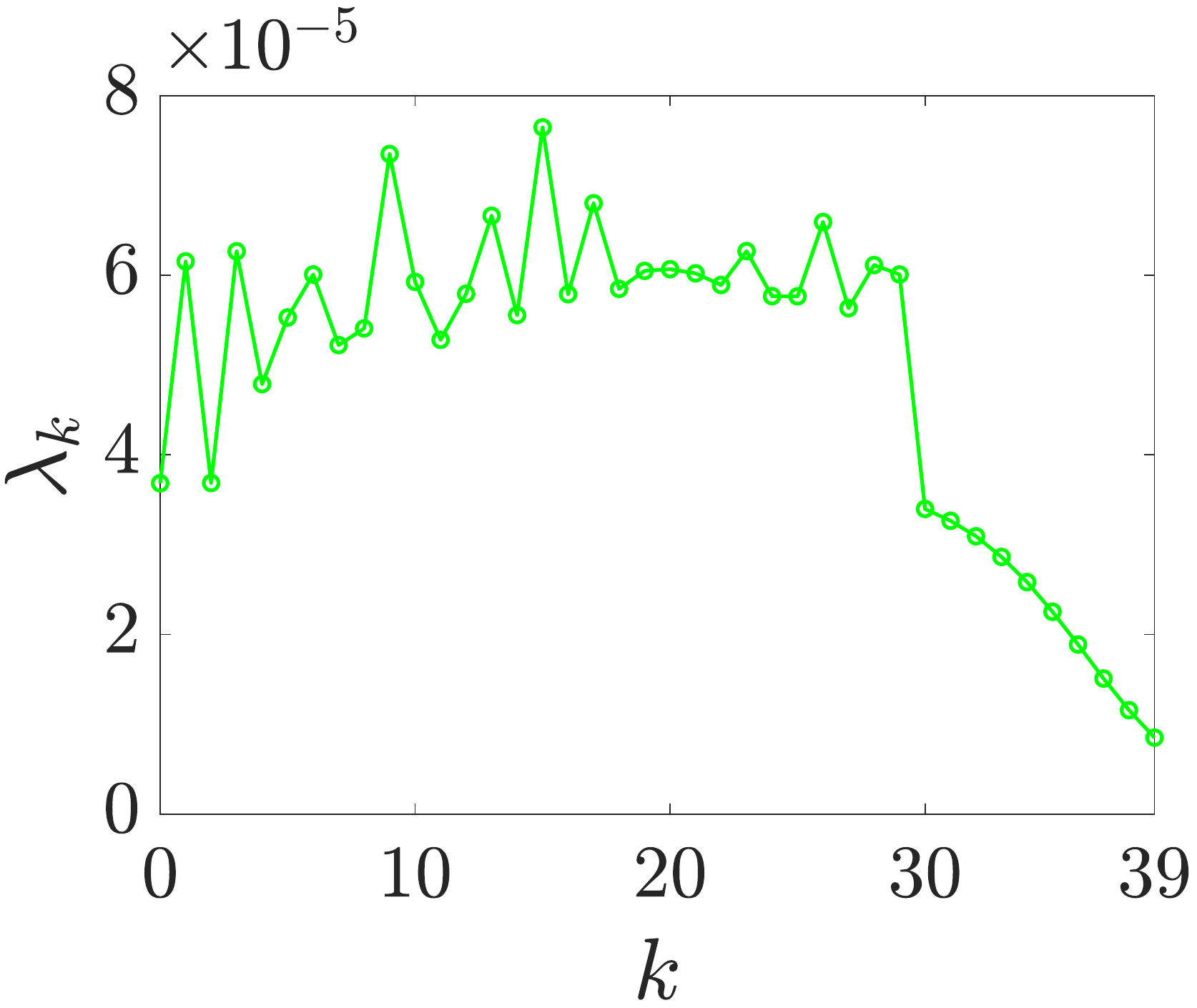}\\
\end{tabular}
\caption{Left to right: estimated {\color{black}stepsize} $(\gamma_k)_{0\leq k\leq K-1}$, {\color{black}barrier parameter} $(\mu_k)_{0\leq k\leq K-1}$ and {\color{black}regularization weight} $(\lambda_k)_{0\leq k\leq K-1}$ for the image from Figure~\ref{fig:gaussianB_image} passed through the network layers.}
\label{fig:gamma_mu_lambda}
\end{figure}

\section{Conclusion}
\label{sec:conclu}

From a variational formulation of an inverse problem, we have derived in this paper a novel neural network architecture by unfolding a proximal interior point algorithm. It can be noted that the proposed approach can be extended to a set of regularization functions, or to penalizations which are parametrized by several variables. Useful constraints on the sought solution can be enforced thanks to a logarithmic barrier, so providing more control over the output of the network. We have shown for three standard types of constraints that the involved proximity operator can easily be computed, and that its derivatives are well-defined and computable. In the case of a quadratic cost function, the theoretical result of Section~\ref{sec:network_stability} regarding the robustness of the network with respect to an input perturbation, ensures the reliability of the proposed method, which is crucial for many applications. It would be interesting to extend the scope of this study to a wider class of problems, and to illustrate this stability result by numerical experiments on different applications like classification. As demonstrated by our experiments in image restoration, iRestNet performs favorably compared to state-of-the-art variational and machine learning methods. An advantage of the proposed approach is that, in contrast with its evaluated competitors, it does not require any knowledge about the noise level and it does not involve any hand-selection of the regularization parameters. One limitation of iRestNet is that the network needs to be trained for a given blur kernel. A direction for future works is to extend the method to situations in which the observation model is not fully known, so as to address blind or semi-blind deconvolution problems.

\ack
We would like to thank the referees and the editor for their valuable comments and suggestions. Marco Prato has received funding from the ECSEL JU programme under the PRYSTINE Project grant agreement n. 783190.

\section*{References}
\bibliographystyle{unsrt}
\bibliography{refs}

\end{document}